\def\NZQ{\mathbb}               
\def\ZZ{{\NZQ Z}}
\def\RR{{\NZQ R}}
\def\frk{\mathfrak}               
\def\Phi{{\frk N}}
\def\ab{{\bold a}}
\def\eb{{\bold e}}
\def\tb{{\bold t}}
\def\xb{{\bold x}}
\def\a{\alpha}
\def\opn#1#2{\def#1{\operatorname{#2}}} 
\opn\chara{char} 
\opn\length{\ell} 
\opn\pd{pd} 
\opn\rk{rk}
\opn\projdim{proj\,dim} 
\opn\injdim{inj\,dim} 
\opn\rank{rank}
\opn\depth{depth} 
\opn\grade{grade} 
\opn\height{height}
\opn\embdim{emb\,dim} 
\opn\codim{codim}
\opn\Tr{Tr} 
\opn\bigrank{big\,rank}
\opn\superheight{superheight}
\opn\lcm{lcm}
\opn\trdeg{tr\,deg}
\opn\reg{reg} 
\opn\lreg{lreg} 
\opn\ini{in} 
\opn\lpd{lpd}
\opn\size{size}
\opn\mult{mult}
\opn\dist{dist}
\opn\cone{cone}
\opn\lex{lex}
\opn\rev{rev}
\DeclareMathOperator{\K-d}{Krull-dim}
\opn\div{div} \opn\Div{Div} \opn\cl{cl} \opn\Cl{Cl}
\opn\Spec{Spec} \opn\Supp{Supp} \opn\supp{supp} \opn\Sing{Sing}
\opn\Ass{Ass} \opn\Min{Min}
\opn\Ann{Ann} \opn\Rad{Rad} \opn\Soc{Soc}
\opn\Syz{Syz} \opn\Im{Im} \opn\Ker{Ker} \opn\Coker{Coker}
\opn\Am{Am} \opn\Hom{Hom} \opn\Tor{Tor} \opn\Ext{Ext}
\opn\End{End} \opn\Aut{Aut} \opn\id{id} \opn\ini{in}
\opn\nat{nat}
\opn\pff{pf}
\opn\Pf{Pf} \opn\GL{GL} \opn\SL{SL} \opn\mod{mod} \opn\ord{ord}
\opn\Gin{Gin}
\opn\Hilb{Hilb}\opn\adeg{adeg}\opn\std{std}\opn\ip{infpt}
\opn\Pol{Pol}
\opn\sat{sat}
\opn\Var{Var}
\opn\Gen{Gen}
\opn\aff{aff} \opn\con{conv} \opn\relint{relint} \opn\st{st}
\opn\lk{lk} \opn\cn{cn} \opn\core{core} \opn\vol{vol}
\opn\link{link} \opn\star{star}
\opn\gr{gr}
\def\pot#1#2{#1[\kern-0.28ex[#2]\kern-0.28ex]}
\opn\dirlim{\underrightarrow{\lim}}
\opn\inivlim{\underleftarrow{\lim}}
\let\union=\cup
\let\to=\rightarrow
\def\Implies{\ifmmode\Longrightarrow \else
        \unskip${}\Longrightarrow{}$\ignorespaces\fi}
\def\implies{\ifmmode\Rightarrow \else
        \unskip${}\Rightarrow{}$\ignorespaces\fi}
\def\iff{\ifmmode\Longleftrightarrow \else
        \unskip${}\Longleftrightarrow{}$\ignorespaces\fi}
\newtheorem{Theorem}{Theorem}[section]
\newtheorem{Lemma}[Theorem]{Lemma}
\newtheorem{Corollary}[Theorem]{Corollary}
\newtheorem{Conjecture}[Theorem]{Conjecture}
\let\epsilon\varepsilon
\let\phi=\varphi
\let\kappa=\varkappa
\def\qed{\ifhmode\textqed\fi
      \ifmmode\ifinner\quad\qedsymbol\else\dispqed\fi\fi}
\def\textqed{\unskip\nobreak\penalty50
       \hskip2em\hbox{}\nobreak\hfil\qedsymbol
       \parfillskip=0pt \finalhyphendemerits=0}
\def\dispqed{\rlap{\qquad\qedsymbol}}
\opn\dis{dis}
\opn\height{height}
\opn\dist{dist}
\def\pnt{{\raise0.5mm\hbox{\large\bf.}}}
\opn\Lex{Lex}
\begin{document}
\title[]
{Depth of initial ideals of normal edge rings}
\author[]{Takayuki Hibi}
\address[]{Department of Pure and Applied Mathematics,
Graduate School of Information Science and Technology,
Osaka University,
Toyonaka, Osaka 560-0043, Japan}
\email{hibi@math.sci.osaka-u.ac.jp}
\author[]{Akihiro Higashitani}
\address[]{Department of Pure and Applied Mathematics,
Graduate School of Information Science and Technology,
Osaka University,
Toyonaka, Osaka 560-0043, Japan}
\email{sm5037ha@ecs.cmc.osaka-u.ac.jp}
\author[]{Kyouko Kimura}
\address[]{Department of Mathematics, Faculty of Science, 
Shizuoka University,
Shizuoka 422-8529, Japan}
\email{skkimur@ipc.shizuoka.ac.jp}
\author[]{Augustine B. O'Keefe}
\address[]{Mathematics Department,
Tulane University,
6823 St.\  Charles Ave,
New Orleans, LA 70118, U.S.A}
\email{aokeefe@tulane.edu}
\thanks{
{\bf 2010 Mathematics Subject Classification:}
13P10. \\
\hspace{0.43cm}
{\bf Keywords:}
edge ring, toric ideal, Gr\"obner basis, initial ideal,
shellable complex. 
}
\begin{abstract}
Let $G$ be a finite graph on the vertex set 
$[d] = \{ 1, \ldots, d \}$
with the edges $e_1, \ldots, e_n$
and $K[\tb] = K[t_1, \ldots, t_d]$ the polynomial ring
in $d$ variables over a field $K$.
The edge ring of $G$ is the semigroup ring $K[G]$ 
which is generated by those monomials $\tb^e = t_it_j$
such that $e = \{i, j\}$ is an edge of $G$.
Let $K[\xb] = K[x_1, \ldots, x_n]$ be the polynomial ring 
in $n$ variables over $K$ and define the surjective homomorphism 
$\pi : K[\xb] \to K[G]$ by setting
$\pi(x_i) = \tb^{e_i}$ for $i = 1, \ldots, n$. 
The toric ideal $I_G$ 
of $G$ is the kernel of $\pi$. It will be proved that, 
given integers $f$ and $d$ with $6 \leq f \leq d$,
there exist a finite connected nonbipartite graph $G$ 
on $[d]$ together with a reverse lexicographic order $<_{\rev}$
on $K[\xb]$ and a lexicographic order $<_{\lex}$ 
on $K[\xb]$
such that (i) $K[G]$ is normal, (ii) 
$\depth K[\xb]/\ini_{<_{\rev}}(I_G) = f$
and (iii) $K[\xb]/\ini_{<_{\lex}}(I_G)$ is 
Cohen--Macaulay,
where $\ini_{<_{\rev}}(I_G)$ (resp.\  $\ini_{<_{\lex}}(I_G)$) is the initial ideal of $I_G$ 
with respect to $<_{\rev}$ (resp.\  $<_{\lex}$) and 
where $\depth K[\xb]/\ini_{<_{\rev}}(I_G)$ is the depth of
$K[\xb]/\ini_{<_{\rev}}(I_G)$.
\end{abstract}
\maketitle

\section*{Introduction}
The study on edge rings \cite{OH98} of finite graphs
together with their toric ideals \cite{OH99} 
have been achieved from viewpoints of both commutative algebra
and combinatorics.  Following the previous paper \cite{HHKO10},
which investigated a question about depth of
edge rings, 
the topic of the present paper is depth of 
initial ideals of normal edge rings.

Let $G$ be a finite simple graph,
i.e., a finite graph with no loop and no multiple edge, 
on the vertex set
$[d] = \{ 1, \ldots, d \}$ and
$E(G) = \{ e_1, \ldots, e_n \}$ its edge set.
Let $K[\tb] = K[t_1, \ldots, t_d]$ be the polynomial ring
in $d$ variables over a field $K$
and write $K[G]$ for the subring of $K[\tb]$ generated by those
monomials $\tb^e = t_it_j$
with $e = \{ i, j \} \in E(G)$.
The semigroup ring $K[G]$ is called 
the {\em edge ring} of $G$.  
Let $K[\xb] = K[x_1, \ldots, x_n]$ be the polynomial ring
in $n$ variables over $K$.
The {\em toric ideal} of $G$ is
the kernel $I_G$ of the surjective ring homomorphism 
$\pi : K[\xb] \to K[G]$ defined by setting
$\pi(x_i) = \tb^{e_i}$ for $i = 1, \ldots, n$. 
Thus in particular one has $K[G] \cong K[\xb]/I_G$.
If $G$ is connected and nonbipartite
(resp.\  connected and bipartite), then $\K-d K[G] = d$ 
(resp.\  $\K-d K[G] = d - 1$), where 
$\K-d K[G]$ stands for the Krull dimension of $K[G]$.

It follows from the criterion \cite[Corollary 2.3]{OH98}
that the edge ring $K[G]$ of a connected graph 
$G$ is normal if and only if, for
any two odd cycles $C_1$ and $C_2$ of $G$ having no common vertex, 
there exists an edge $\{v, w\}$ of $G$ 
such that $v$ is a vertex of $C_1$ and $w$ is a vertex of $C_2$.

We refer the reader to \cite[Chapter 2]{HerzogHibi}
for fundamental materials on Gr\"obner bases.
Let $<$ be a monomial order on $K[\xb]$ and
$\ini_<(I_G)$ the initial ideal of $I_G$ 
with respect to $<$.
The topic of this paper is  
$\depth K[\xb]/\ini_<(I_G)$,
the depth of $K[\xb]/\ini_<(I_G)$,
when $K[G]$ is normal.  Computational experience yields the following

\begin{Conjecture}
\label{depthconjecture}
{\em
Let $G$ be a finite connected nonbipartite graph on $[d]$
with $d \geq 6$ and
suppose that its edge ring $K[G]$ is normal.  Then 
$\depth K[\xb]/\ini_<(I_G) \geq 6$ for any monomial order $<$
on $K[\xb]$.
}
\end{Conjecture}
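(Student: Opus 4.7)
The plan is to leverage the fact that a normal edge ring is Cohen--Macaulay by Hochster's theorem, so that $\depth K[\xb]/I_G = \dim K[G] = d \ge 6$, and then to bound the depth drop that occurs when passing from $I_G$ to $\ini_<(I_G)$. Since $\depth K[\xb]/\ini_<(I_G) \le \depth K[\xb]/I_G$ is automatic for every monomial order, the content is a lower bound only, and the real task is to control this drop uniformly in $<$, showing that it never exceeds $d-6$.

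First I would reduce to a squarefree situation by polarization: replace $\ini_<(I_G)$ by a squarefree monomial ideal $J$ in a polynomial ring $K[\xb,\yb]$ in $n+N$ auxiliary variables, so that $\depth K[\xb,\yb]/J = \depth K[\xb]/\ini_<(I_G) + N$. Let $\Delta_<$ be the simplicial complex whose Stanley--Reisner ideal is $J$. To describe $\Delta_<$ combinatorially, I would use the classical fact that the universal Gr\"obner basis of $I_G$ is indexed by the primitive even closed walks of $G$, which fall into three families: even cycles, two odd cycles sharing exactly one vertex, and two odd cycles joined by a path. For each monomial order $<$ one selects one monomial per primitive binomial as a generator of $\ini_<(I_G)$; after polarization this yields an explicit description of $\Delta_<$ in terms of walk data on $G$.

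To estimate $\depth K[\Delta_<]$ from below I would apply Reisner's criterion, verifying $\widetilde{H}_{i-1}(\lk_{\Delta_<}(\sigma);K) = 0$ for every face $\sigma$ of $\Delta_<$ and every $i \le 5$. The bridge condition characterizing normality (any two vertex-disjoint odd cycles of $G$ are joined by an edge of $G$) would be the main geometric input: it should allow one to fill in low-dimensional cycles inside each link by building explicit cones whose apices are vertices of $\Delta_<$ coming from bridge edges. The natural implementation is an induction on $d$, or on the number of odd cycles of $G$, where the bridge condition lets one peel off an odd cycle and apply the inductive hypothesis to a smaller normal graph; the base case would exploit the forthcoming existence theorem of the abstract, which exhibits explicit graphs on $d=6$ vertices realizing depth exactly $6$ and thus pins down the extremal combinatorial configurations.

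\textbf{Main obstacle.} The decisive difficulty is that $\ini_<(I_G)$ depends extremely sensitively on $<$: different orders produce initial ideals with vastly different primary decompositions, so no single simplicial model is available uniformly in $<$. In particular, lexicographic orders relative to poorly chosen variable orderings tend to produce many embedded primes, and any attack on $\Delta_<$ by peeling must be compatible with all such primes simultaneously. Worse, the number $6$ has no obvious combinatorial explanation from the walk picture: one would naively expect the drop to be controlled by the length of the shortest primitive closed walk or by the edge-connectivity of $G$, which do not evidently produce the cutoff $6$. Isolating the right graph-theoretic invariant that explains why depth cannot drop below $6$ for normal connected nonbipartite graphs, and which matches the extremal examples of the abstract, is the heart of the problem.
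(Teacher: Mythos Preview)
The statement you are attempting is not a theorem in the paper but a \emph{conjecture}; the authors say explicitly, immediately after stating it, that ``Conjecture \ref{depthconjecture} is completely open.'' There is therefore no proof in the paper to compare your proposal against. The paper's contribution is Theorem \ref{main}, which is motivated by the conjecture but is logically independent of it: it shows that the bound $6$, if true, would be sharp, by exhibiting for every $d\ge 6$ a normal connected nonbipartite graph on $[d]$ and a reverse lexicographic order for which the depth of the initial quotient equals exactly $6$.

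Your proposal is, by your own admission, not a proof either: the final paragraph names the central obstruction (the initial ideal depends wildly on $<$, and the number $6$ has no visible combinatorial source) and leaves it unresolved. A few of the intermediate steps are also looser than they appear. The universal Gr\"obner basis of $I_G$ is indexed by primitive even closed walks, but a fixed monomial order need not pick its reduced Gr\"obner basis from among these binomials alone in any way that gives a clean simplicial description after polarization; and Reisner's criterion characterizes Cohen--Macaulayness, not an arbitrary depth bound---for $\depth\ge 6$ one would instead need control over local cohomology modules $H^i_{\mm}$ for $i<6$, or equivalently vanishing of $\Ext^j$ for $j>n-6$, which is a strictly harder homological statement to extract from link homology. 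The inductive scheme you sketch (peel off an odd cycle using a bridge edge) does not obviously preserve normality of the remaining graph, nor does it interact in any controlled way with a monomial order chosen in advance on the full edge set.

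In short: the paper offers no argument for this statement, and neither does your proposal; both the authors and you correctly regard it as open.
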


Now, even though Conjecture \ref{depthconjecture} is completely open, 
by taking Conjecture \ref{depthconjecture} into consideration,
this paper will be devoted to proving the following 

\begin{Theorem}
\label{main}
Given integers $f$ and $d$ with $6 \leq f \leq d$,
there exists a finite connected nonbipartite graph $G$ 
on $[d]$ together with a reverse lexicographic order $<_{\rev}$
on $K[\xb]$ and a lexicographic order $<_{\lex}$ 
on $K[\xb]$
such that 
\begin{enumerate}
\item[{\rm (i)}]$K[G]$ is normal;
\item[{\rm (ii)}] 
$\depth K[\xb]/\ini_{<_{\rev}}(I_G) = f$;
\item[{\rm (iii)}]
$K[\xb]/\ini_{<_{\lex}}(I_G)$ is Cohen--Macaulay.
\end{enumerate}
\end{Theorem}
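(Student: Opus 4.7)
The plan is to construct, for each pair $(f,d)$ with $6 \leq f \leq d$, an explicit connected nonbipartite graph $G = G_{f,d}$ on $[d]$ and then pick two monomial orders whose initial complexes have the right combinatorial structure. The starting block is a six-vertex ``core'' $G_0$ consisting of two triangles joined by a bridge edge: this is connected, nonbipartite, and normal by the bridging criterion of \cite{OH98} recalled in the introduction. To obtain $G_{f,d}$ from $G_0$, I would attach $f-6$ ``coning'' gadgets (for instance, triangles each sharing a single vertex with the previous piece) which raise depth by one per new vertex in any reasonable initial complex, together with a further $d-f$-vertex gadget designed to raise the Krull dimension to $d$ while producing a prescribed non-Cohen--Macaulay defect of ``codepth'' $d-f$ in the reverse-lex initial complex. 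Normality of $K[G_{f,d}]$ is then checked directly by applying the bridging criterion to the odd cycles of the full construction; by design every pair of disjoint odd cycles lives inside a gadget that is bridged to the core.

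Since $K[G_{f,d}]$ is normal, every initial ideal $\ini_<(I_G)$ is squarefree (by the Sturmfels correspondence between normality and squarefreeness), so $\ini_<(I_G)$ is the Stanley--Reisner ideal of a simplicial complex $\Delta_<$, and properties (ii) and (iii) become combinatorial statements about $\Delta_{<_\rev}$ and $\Delta_{<_\lex}$. For (ii), I would order the edge variables so that the Gr\"obner basis of $I_G$ with respect to $<_\rev$ is computable from the standard even-closed-walk description (cf.\ \cite{HerzogHibi}) and the resulting $\Delta_{<_\rev}$ splits as a join of a ``core piece'' on $G_0$ (depth $6$), cones coming from the $f-6$ coning gadgets (each adding $+1$ to depth), and a Buchsbaum-type piece on the defect gadget. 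Since depth is additive across joins of Stanley--Reisner complexes, this gives $\depth K[\xb]/\ini_{<_\rev}(I_G) = f$. For (iii), I would reorder the variables so that $\Delta_{<_\lex}$ admits an explicit shelling, built in a lex-compatible way from a spanning tree of $G_{f,d}$ and the bridge structure; shellability then yields Cohen--Macaulayness.

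The hardest step will be Step~(ii), and specifically the calibration required to hit $\depth = f$ exactly rather than just an inequality. The lower bound $\depth \geq f$ should come from (partial) shellability of the join-components outside the defect gadget, whereas the upper bound $\depth \leq f$ requires producing an explicit face $\sigma \in \Delta_{<_\rev}$ whose link has non-vanishing reduced homology in degree $f - |\sigma| - 1$, equivalently a non-zero class in $H_{\mm}^{f}(K[\xb]/\ini_{<_\rev}(I_G))$ by Reisner's criterion / Hochster's formula. Designing the defect gadget so that this homology appears in precisely the right degree, while ensuring that the \emph{same} underlying graph $G_{f,d}$ supports a different variable order making $\Delta_{<_\lex}$ shellable, is the technical heart of the argument and what will force the graph construction to be chosen carefully rather than generically.
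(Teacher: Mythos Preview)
Your proposal contains a genuine error that undermines the approach to part (ii). You assert that ``since $K[G_{f,d}]$ is normal, every initial ideal $\ini_<(I_G)$ is squarefree (by the Sturmfels correspondence between normality and squarefreeness).'' This is the wrong direction of Sturmfels' theorem: squarefreeness of some initial ideal implies normality, but normality does \emph{not} force every (or even any) initial ideal to be squarefree. Indeed, in the paper's construction the reverse lexicographic initial ideal of $I_{G_{k+5}}$ contains the generator $x_{k+1}x_{2k+2}x_{2k+3}^{2}$, which is not squarefree. Consequently $\ini_{<_{\rev}}(I_G)$ is not a Stanley--Reisner ideal, and your plan to compute $\depth K[\xb]/\ini_{<_{\rev}}(I_G)$ via Reisner's criterion, Hochster's formula, or additivity of depth under joins of Stanley--Reisner complexes does not apply as written. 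The upper bound $\depth \leq f$, which you correctly flag as the crux, cannot be obtained by finding a face with bad link homology in a nonexistent simplicial complex.

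The paper circumvents this by working with tools valid for arbitrary monomial ideals: the upper bound comes from the Koszul simplicial complex ${\bf K}^{\ab}(I)$ at a carefully chosen (non-squarefree) multidegree $\ab$, together with a Mayer--Vietoris argument, and the lower bound comes from iterated short exact sequences and elementary depth estimates. The global construction is also simpler than your gadget assembly: rather than a fixed six-vertex core plus separate ``coning'' and ``defect'' pieces, the paper builds a single graph $G_{d-f+6}$ on $d-f+6$ vertices whose revlex initial ideal already has depth exactly $6$ (so codepth $d-f$), and then appends $f-6$ pendant edges at one vertex. Pendant edges contribute variables that do not appear in $I_G$ at all, so $I_G$ is just $I_{G_{d-f+6}}$ extended to the larger polynomial ring, and depth goes up by exactly $f-6$ for free. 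Your ``coning gadgets'' are thus unnecessary, and the delicate calibration you anticipate is handled entirely inside the family $G_{k+5}$.
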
  

Let $k \geq 1$ be an arbitrary integer.  We introduce 
the finite connected nonbipartite graph $G_{k+5}$ on $[k+5]$ 
which is drawn in Figure $0.1$.
Clearly, the edge ring $K[G_{k+5}]$ is normal.
It will turn out that 
$G_{k+5}$ plays an important role
in our proof of Theorem \ref{main}. 

\medskip

\begin{center}
  \begin{picture}(300,160)(0,-50)
    \put(150,90){\circle*{5}}
    \put(150,70){\circle*{5}}
    \put(150,45){\circle*{1}}
    \put(150,40){\circle*{1}}
    \put(150,35){\circle*{1}}
    \put(150,10){\circle*{5}}
    \put(70,50){\circle*{5}} 
    \put(70,-30){\circle*{5}} 
    \put(20,10){\circle*{5}} 
    \put(230,50){\circle*{5}} 
    \put(230,-30){\circle*{5}} 
    \put(280,10){\circle*{5}} 
    \put(150,95){$5$}
    \put(150,75){$6$}
    \put(150,-3){$k+3$}
    \put(60,55){$1$}
    \put(60,-40){$3$}
    \put(-10,10){$k+4$}
    \put(233,55){$2$}
    \put(233,-40){$4$}
    \put(285,10){$k+5$}
    \thinlines
    \put(70,50){\line(2,1){80}}
    \put(70,50){\line(4,1){80}}
    \put(70,50){\line(2,-1){80}}
    \put(70,-30){\line(-5,4){50}}
    \put(70,50){\line(-5,-4){50}}
    \put(230,50){\line(-2,1){80}}
    \put(230,50){\line(-4,1){80}}
    \put(230,50){\line(-2,-1){80}}
    \put(230,-30){\line(5,4){50}}
    \put(230,50){\line(5,-4){50}}
    \put(70,50){\line(0,-1){80}}
    \put(230,50){\line(0,-1){80}}
    \put(70,-30){\line(1,0){160}}
    \put(110,80){$e_2$}
    \put(110,52.5){$e_3$}
    \put(110,20){$e_k$}
    \put(22,33){$e_{k+1}$}
    \put(22,-17){$e_{2k+4}$}
    \put(180,80){$e_{k+3}$}
    \put(180,52.5){$e_{k+4}$}
    \put(180,20){$e_{2k+1}$}
    \put(255,33){$e_{2k+2}$}
    \put(255,-17){$e_{2k+5}$}
    \put(75,0){$e_1$}
    \put(207,0){$e_{k+2}$}
    \put(140,-40){$e_{2k+3}$}
  \end{picture}
\newline
{\bf Figure 0.1.} (finite graph $G_{k+5}$)
\end{center}

\bigskip

The essential step in order to prove Theorem \ref{main} is
to show the following

\begin{Lemma}
\label{reverselex}
Let $<_{\rev}$ (resp.\  $<_{\lex}$)
denote the reverse lexicographic order
(resp.\  the lexicographic order)
on $K[\xb] = K[x_1, \ldots, x_{2k+5}]$
induced by the ordering $x_1 > \cdots > x_{2k+5}$
of the variables.  Then 
\begin{enumerate}
\item[{\rm (i)}]
$\depth K[\xb]/\ini_{<_{\rev}}(I_{G_{k+5}}) = 6$;
\item[{\rm (ii)}]
$K[\xb]/\ini_{<_{\lex}}(I_{G_{k+5}})$ is Cohen--Macaulay.
\end{enumerate}
\end{Lemma}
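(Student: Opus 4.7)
The plan is to give an explicit Gr\"obner basis of $I_{G_{k+5}}$ under each monomial order, identify the resulting initial ideals, and then analyze their depths.

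\emph{Step $1$ (Gr\"obner bases).} The binomials in $I_{G_{k+5}}$ arise from primitive even closed walks in $G_{k+5}$. The essential families are (a) the $2\times 2$ minors $x_i x_{j+k+1}-x_j x_{i+k+1}$ for $2\le i<j\le k$ from the $4$-cycles of the bipartite middle $K_{2,k-1}$; (b) the \emph{bridge} binomial $x_1 x_{k+2} x_{2k+4} x_{2k+5}-x_{k+1} x_{2k+2} x_{2k+3}^2$ from the walk traversing both odd triangles via the bridge $e_{2k+3}$; and (c) binomials from $6$-cycles and longer ``dumbbell'' walks mixing the two triangles with paths through the middle. Via Buchberger's criterion and explicit S-pair reductions, I would identify the reduced Gr\"obner basis for each of $<_{\rev}$ and $<_{\lex}$.

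\emph{Step $2$ (Cohen--Macaulayness under $<_{\lex}$).} With $<_{\lex}$, I expect every leading term in the reduced Gr\"obner basis to be squarefree (the bridge contributing $x_1 x_{k+2} x_{2k+4} x_{2k+5}$, the determinantal generators contributing $x_i x_{j+k+1}$, and any higher-cycle binomials with a priori non-squarefree leading terms cancelling against the cycle generators during reduction). Then $K[\xb]/\ini_{<_{\lex}}(I_{G_{k+5}})$ is the Stanley--Reisner ring of a complex $\Delta_{\lex}$, and Cohen--Macaulayness would follow from producing an explicit shelling of $\Delta_{\lex}$, obtained by extending the classical shelling of the ladder complex governing the determinantal part with facets compatible with the triangle-bridge structure.

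\emph{Step $3$ (depth $=6$ under $<_{\rev}$).} With $<_{\rev}$, the bridge leading term $x_{k+1} x_{2k+2} x_{2k+3}^2$ is not squarefree, and I expect it to be the only such generator in the reduced basis. Polarizing $x_{2k+3}^2$ to $x_{2k+3}\,y$ for a fresh variable $y$ yields a squarefree monomial ideal $J^{\mathrm{pol}}$, and polarization preserves projective dimension, giving
\[
\depth K[\xb]/\ini_{<_{\rev}}(I_{G_{k+5}})=\depth K[\xb,y]/J^{\mathrm{pol}}-1;
\]
it thus suffices to show $\depth K[\xb,y]/J^{\mathrm{pol}}=7$. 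For the lower bound $\ge 7$ I would combine the variables missing from every minimal generator of $J^{\mathrm{pol}}$ with the Cohen--Macaulayness of the determinantal subring to assemble a regular sequence of length $7$. For the upper bound $\le 7$ I would identify a face $\sigma$ of the simplicial complex associated to $J^{\mathrm{pol}}$ whose link has nonvanishing reduced homology in the appropriate degree---equivalently, an associated prime of $K[\xb,y]/J^{\mathrm{pol}}$ of coheight $7$ supported near the bridge vertices $\{x_{k+1},x_{2k+2},x_{2k+3},y\}$ together with suitable middle variables.

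The principal technical obstacle is certifying the reduced Gr\"obner bases: in particular, that under $<_{\rev}$ the bridge binomial is the \emph{only} generator contributing a non-squarefree leading term, and that under $<_{\lex}$ every leading term becomes squarefree after reduction. A secondary hurdle is producing the shelling of $\Delta_{\lex}$ and the local cohomology analysis of the polarized complex uniformly in $k$; I expect induction on $k$, adjoining one middle vertex at a time, to be the most natural tool.
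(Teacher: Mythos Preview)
Your plan for Steps~1 and~2 matches the paper: the Gr\"obner basis under $<_{\lex}$ has only squarefree leading terms, and Cohen--Macaulayness of $K[\xb]/\ini_{<_{\lex}}(I_{G_{k+5}})$ is proved by exhibiting an explicit shelling of the associated Stanley--Reisner complex.

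Step~3, however, rests on a concrete error. The bridge binomial is \emph{not} the only member of the reduced Gr\"obner basis under $<_{\rev}$ with a non-squarefree leading term. The type-(4) dumbbell walks with $p=q$ contribute binomials whose $<_{\rev}$-leading terms are $x_p^{2}x_{k+2}x_{2k+2}x_{2k+4}$ for $2\le p\le k$, and none of these is divisible by any other leading term. Hence $\ini_{<_{\rev}}(I_{G_{k+5}})$ has $k$ non-squarefree minimal generators, polarization requires $k$ new variables rather than one, and the target depth for the polarized ring becomes $k+6$, not $7$. Your lower-bound heuristic is also too weak: only $x_1$ and $x_{2k+5}$ are absent from every minimal generator of the initial ideal, so ``variables missing from all generators'' yields only $\depth\ge 2$. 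Finally, exhibiting an associated prime of a given coheight does not by itself bound the depth from above.

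The paper's argument for (i) avoids polarization. For $\depth\le 6$ it works directly with the multigraded Koszul simplicial complex ${\bf K}^{\ab}(I)$ at the non-squarefree multidegree $\ab=\sum_{j=2}^{k}(\eb_j+\eb_{k+1+j})+\eb_{k+1}+\eb_{2k+2}+2\eb_{2k+3}$: a Mayer--Vietoris decomposition into a cone and a simplex shows their intersection is a $(2k-4)$-sphere, forcing $\beta_{2k-2,\ab}\neq 0$ and hence $\pd\ge 2k-1$. For $\depth\ge 6$ it writes $I=I_1+\cdots+I_5$ as a sum of five explicit monomial ideals and climbs through the partial sums $J_1\subset J_2\subset J_3\subset I$ using the depth inequality from the short exact sequence $0\to S/(A\cap B)\to S/A\oplus S/B\to S/(A+B)\to 0$, together with the elementary bound that an ideal whose generators involve only $m$ of the variables has depth at least $n-m$.
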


Once we establish Lemma \ref{reverselex}, to prove Theorem \ref{main} 
is straightforward. 
In fact, given integers $f$ and $d$ with $6 \leq f \leq d$,
we define the finite graph $\Gamma$ on $[d - f + 6]$
to be $G_{d - f + 6}$
with the edges $e_1, e_2, \ldots, e_{2(d-f)+7}$
and then introduce the finite connected nonbipartite graph $G$ on $[d]$ 
which is obtained from $\Gamma$ by adding $f - 6$ edges
\[
e_{2(d-f)+7 + i} = \{ 1, d - f + 6 + i \},
\, \, \, \, \, i = 1, \ldots, f - 6 
\]
to $\Gamma$.
Clearly, both edge rings $K[\Gamma]$ and $K[G]$ are normal, and
\[
I_G = I_\Gamma (K[x_1, \ldots, x_{2d-f+1}]).
\]
Thus in particular
\[
\ini_{<}(I_G) = 
\ini_{<}(I_\Gamma)(K[x_1, \ldots, x_{2d-f+1}]),
\]
where $<$ is any monomial order on 
$K[x_1, \ldots, x_{2d-f+1}]$.
Thus Lemma \ref{reverselex} guarantees that 
\[
\depth K[x_1, \ldots, x_{2d-f+1}]/\ini_{<_{\rev}}(I_G) = f 
\]
and $K[x_1, \ldots, x_{2d-f+1}]/\ini_{<_{\lex}}(I_G)$ is 
Cohen--Macaulay, as desired.

\section{Preliminaries}

Let $G=G_{k+5}$. 
In this section, we will find a Gr\"{o}bner basis of $I_G$ 
and a set of generators of the initial ideal of $I_G$ 
with respect to the reverse lexicographic order. 

Let $K[\xb]=K[x_1,\ldots,x_{2k+5}]$ be the polynomial ring 
in $2k+5$ variables over a field $K$. 
There are 4 kinds of primitive even closed walks of $G$:
\begin{enumerate}
\item a 4-cycle: $(e_i,e_{k+1+i},e_{k+1+j},e_j)$, where $2 \leq i < j \leq k$; 
\item a walk on two 3-cycles and the same edge $e_{2k+3}$ combining two cycles: \\
$(e_1,e_{k+1},e_{2k+4},e_{2k+3},e_{k+2},e_{2k+2},e_{2k+5},e_{2k+3})$; 
\item a 6-cycle: $(e_i,e_{k+1+i}, e_{k+2}, e_{2k+3}, e_{2k+4}, e_{k+1})$ or 
$(e_i, e_{k+1+i}, e_{2k+2}, e_{2k+5}, e_{2k+3}, e_1)$, where $2 \leq i \leq k$; 
\item a walk on two 3-cycles and the length $2$ paths combining two cycles: 
\newline
$(e_{k+2},e_{2k+5},e_{2k+2},e_{k+1+i},e_{i},e_{1},e_{2k+4},e_{k+1},e_{j},e_{k+1+j})$, 
where $2 \leq i \leq j \leq k$. 
\end{enumerate}

It was proved in \cite[Lemma 3.1]{OH99} that 
the binomials corresponding to these primitive even closed walks 
generate the toric ideal $I_G$. 
Let $<_{\rev}$ be the reverse lexicographic order with $x_1>x_2> \cdots > x_{2k+5}$. 

\begin{Lemma}\label{Grobner}
The set of binomials corresponding to primitive even closed walks 
(1), (2), (3) and (4) is a Gr\"{o}bner basis of $I_G$ 
with respect to $<_{\rev}$. 
\end{Lemma}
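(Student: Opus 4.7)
My plan is to apply Buchberger's criterion. Since Lemma~3.1 of \cite{OH99} has already established that the binomials corresponding to primitive walks of types (1)--(4) generate $I_G$, it suffices to verify that the S-polynomial of every pair of such binomials reduces to zero modulo the candidate basis under $<_{\rev}$.

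A direct inspection of each binomial $\xb^a - \xb^b$ (looking at the sign of the last nonzero entry of $a - b$, which in each of our binomials is read off from the unique variable of largest index appearing in the support) yields the following initial terms: a type~(1) binomial $x_i x_{k+1+j} - x_j x_{k+1+i}$ has initial $x_j x_{k+1+i}$; the type~(2) binomial has initial $x_{k+1} x_{2k+2} x_{2k+3}^2$; a type~(3a) binomial has initial $x_{k+1} x_{k+1+i} x_{2k+3}$; a type~(3b) binomial has initial $x_i x_{2k+2} x_{2k+3}$; and a type~(4) binomial has initial $x_i x_j x_{k+2} x_{2k+2} x_{2k+4}$.

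Next, I would invoke Buchberger's first criterion to dismiss all S-pairs whose initial monomials are coprime, leaving a finite list of pairs grouped by their shared variables. For two type~(1) binomials sharing $x_j$ or $x_{k+1+i}$, the S-polynomial is a scalar multiple of a third type~(1) binomial, corresponding to the 4-cycle obtained by gluing two 4-cycles along a common edge. For a type~(3a) and type~(3b) pair sharing $x_{2k+3}$, the S-polynomial reduces through the type~(2) binomial. Pairs of type~(1) with type~(3) or type~(4) along a shared variable reduce to another type~(3) or type~(4) binomial with shifted indices. Pairs within type~(4), and pairs between type~(4) and types~(2) or~(3), are handled by observing that the concatenation of the two underlying walks decomposes into primitive walks already on our list, and this decomposition prescribes an explicit reduction sequence.

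The principal obstacle is the combinatorial explosion of S-pairs involving type~(4) binomials: their initial monomials have degree~$5$ and share variables with nearly every other initial, so certain reductions demand a chain of two or three successive divisions. The cleanest way to organize this is via the graph-theoretic dictionary in which each S-polynomial corresponds to gluing the two underlying primitive walks along a common edge; decomposing the resulting closed walk into a sequence of primitive subwalks of types (1)--(4) then reads off the reduction explicitly. With this bookkeeping in place, verifying Buchberger's criterion reduces to a finite, if lengthy, case analysis.
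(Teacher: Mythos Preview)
Your plan is the same as the paper's: apply Buchberger's criterion to the generating set coming from \cite[Lemma~3.1]{OH99}, identify the initial terms exactly as you list them, discard coprime pairs, and reduce the remaining S-polynomials case by case. The paper carries this out explicitly in nine cases indexed by the pair of types involved, with no graph-theoretic language; your ``gluing walks'' heuristic is a convenient mnemonic but in the end the verification is the same direct algebra.

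One concrete slip to fix: the S-polynomial of a type~(3a) binomial $x_i x_{k+2} x_{2k+4} - \underline{x_{k+1} x_{k+1+i} x_{2k+3}}$ and a type~(3b) binomial $\underline{x_p x_{2k+2} x_{2k+3}} - x_1 x_{k+1+p} x_{2k+5}$ does \emph{not} reduce through the type~(2) binomial. A direct computation gives
\[
S(f,g) = -\bigl(x_i x_p x_{k+2} x_{2k+2} x_{2k+4} - x_1 x_{k+1} x_{k+1+i} x_{k+1+p} x_{2k+5}\bigr),
\]
which is (up to sign) a type~(4) generator; note that $x_{2k+3}$ has cancelled entirely, so the type~(2) initial $x_{k+1} x_{2k+2} x_{2k+3}^2$ can never divide a term of this S-polynomial. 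In your walk picture, gluing a (3a)-hexagon and a (3b)-hexagon along the shared edge $e_{2k+3}$ produces exactly the length-$10$ walk of type~(4), not the length-$8$ walk of type~(2). With this correction your outline matches the paper's proof.
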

\begin{proof}
The result follows from a direct application of Buchberger's criterion 
to the set of generators of $I_G$ corresponding to the primitive even closed walks 
listed above. Let $f$ and $g$ be two such generators. 
We will prove that the $S$-polynomial $S(f,g)$ will reduce to 0 
by generators of type (1), (2), (3) and (4). 
Let $i,j,p,q$ be integers with $2 \leq i,j,p,q \leq k$. 
On the following proof, we will underline the leading monomial of a binomial 
with respect to $<_{\rev}$. 
\newline
{\bf Case 1:} Let $f=x_ix_{k+1+j}-\underline{x_jx_{k+1+i}}$ 
and $g=x_px_{k+1+q}-\underline{x_qx_{k+1+p}}$ 
be generators of type (1), where $i < j$ and $p < q$. 
If $i \not= p$ and $j \not= q$, then the leading monomials of $f$ and $g$ are coprime. 
Thus $S(f,g)$ will reduce to 0. We assume that $i=p$. Then 
\begin{eqnarray*}
S(f,g) &=& \frac{\lcm(LM_{<_{\rev}} (f), LM_{<_{\rev}} (g))}{LT_{<_{\rev}}(f)}f
-\frac{\lcm(LM_{<_{\rev}} (f), LM_{<_{\rev}} (g))}{LT_{<_{\rev}}(g)}g\\
&=& -x_q(x_ix_{k+1+j}-x_jx_{k+1+i})- (-x_j)(x_ix_{k+1+q}-x_qx_{k+1+i}) \\
&=& -x_i(x_qx_{k+1+j}-x_jx_{k+1+q}). 
\end{eqnarray*}
Note that, up to sign, $x_qx_{k+1+j}-x_jx_{k+1+q}$ is a generator of $I_G$ of type (1).
Therefore $S(f,g)$ will reduce to 0. The case of $j=q$ is similar. \\
{\bf Case 2:} 
Let $f$ be the same as above and 
$g=x_1x_{k+2}x_{2k+4}x_{2k+5}-\underline{x_{k+1}x_{2k+2}x_{2k+3}^2}$
a generator of type (2). Since $2 \leq i < j \leq k$, 
the leading monomials of $f$ and $g$ are always coprime. \\
{\bf Case 3:} Again, we set that $f$ is the same as above. Let $g$ be of type (3). 
First, let $g=x_px_{k+2}x_{2k+4}-\underline{x_{k+1}x_{k+1+p}x_{2k+3}}$. 
If $i \not= p$, then the leading monomials of $f$ and $g$ are coprime. 
We assume that $i=p$. Then 
\begin{eqnarray*}
S(f,g)&=& -x_{k+1}x_{2k+3}f - (-x_j)g \\
&=& -x_ix_{k+1}x_{k+1+j}x_{2k+3}+x_ix_jx_{k+2}x_{2k+4} \\
&=& x_i(x_jx_{k+2}x_{2k+4}-x_{k+1}x_{k+1+j}x_{2k+3}), 
\end{eqnarray*}
where $x_jx_{k+2}x_{2k+4}-x_{k+1}x_{k+1+j}x_{2k+3}$ is of type (3). 
Next, let $g=\underline{x_px_{2k+2}x_{2k+3}}-x_1x_{k+1+p}x_{2k+5}$. 
If $j \not= p$, then the leading monomials of $f$ and $g$ are coprime. 
We assume that $j=p$. Then 
\begin{eqnarray*}
S(f,g)&=& -x_{2k+2}x_{2k+3}f - x_{k+1+i}g \\
&=& -x_{k+1+j}(x_ix_{2k+2}x_{2k+3}-x_1x_{k+1+i}x_{2k+5}) 
\end{eqnarray*}
and again we have that $x_ix_{2k+2}x_{2k+3}-x_1x_{k+1+i}x_{2k+5}$ is of type (3). \\
{\bf Case 4:} Again, we assume that $f$ is the same as above. 
Let $g=\underline{x_px_qx_{k+2}x_{2k+2}x_{2k+4}}-x_1x_{k+1}x_{k+1+p}x_{k+1+q}x_{2k+5}$ 
be of type (4), where $p \leq q$. 
If $j \not= p$ and $j \not= q$, then the leading monomials of $f$ and $g$ are coprime. 
If $j=p$, then 
\begin{eqnarray*}
S(f,g)&=& -x_qx_{k+2}x_{2k+2}x_{2k+4}f - x_{k+1+i}g \\
&=& -x_{k+1+j}(x_ix_qx_{k+2}x_{2k+2}x_{2k+4}-x_1x_{k+1}x_{k+1+i}x_{k+1+q}x_{2k+5}), 
\end{eqnarray*}
which is a multiple of type (4) generator. The case of $j=q$ is similar. \\
{\bf Case 5:} 
Let $f=x_1x_{k+2}x_{2k+4}x_{2k+5}-\underline{x_{k+1}x_{2k+2}x_{2k+3}^2}$ 
be a generator of type (2), 
and $g$ a generator of type (3). First we consider the case where 
$g=x_px_{k+2}x_{2k+4}-\underline{x_{k+1}x_{k+1+p}x_{2k+3}}$. Then 
\begin{eqnarray*}
S(f,g)&=& -x_{k+1+p}f - (-x_{2k+2}x_{2k+3})g \\
&=& x_{k+2}x_{2k+4}(x_px_{2k+2}x_{2k+3}-x_1x_{k+1+p}x_{2k+5}), 
\end{eqnarray*}
where $x_px_{2k+2}x_{2k+3}-x_1x_{k+1+p}x_{2k+5}$ is of type (3). 
Next, let $g=\underline{x_px_{2k+2}x_{2k+3}}-x_1x_{k+1+p}x_{2k+5}$. Then 
\begin{eqnarray*}
S(f,g)&=& -x_pf - x_{k+1}x_{2k+3}g \\
&=& -x_1x_{2k+5}(x_px_{k+2}x_{2k+4}-x_{k+1}x_{k+1+p}x_{2k+3}) 
\end{eqnarray*}
and we have that $x_px_{k+2}x_{2k+4}-x_{k+1}x_{k+1+p}x_{2k+3}$ is of type (3). \\
{\bf Case 6:} Let $f$ be the same as in Case 5 and 
$g=\underline{x_px_qx_{k+2}x_{2k+2}x_{2k+4}}-x_1x_{k+1}x_{k+1+p}x_{k+1+q}x_{2k+5}$ 
be of type (4) generator, where $p \leq q$. Then 
\begin{eqnarray*}
S(f,g)&=& -x_px_qx_{k+2}x_{2k+4}f - x_{k+1}x_{2k+3}^2g \\
&=& -x_1x_{2k+5}(x_px_qx_{k+2}^2x_{2k+4}^2-
\underline{x_{k+1}^2x_{k+1+p}x_{k+1+q}x_{2k+3}^2}) \\
&=& -x_1x_{2k+5}\{x_{k+1}x_{k+1+q}x_{2k+3}(x_px_{k+2}x_{2k+4}-x_{k+1}x_{k+1+p}x_{2k+3}) \\
&\quad&\quad\quad +x_px_{k+2}x_{2k+4}(x_qx_{k+2}x_{2k+4}-x_{k+1}x_{k+1+q}x_{2k+3})\}. 
\end{eqnarray*}
Thus $S(f,g)$ reduce to 0 by generators of type (3). \\
{\bf Case 7:} We assume that both $f$ and $g$ are of type (3). 
First, we consider the case where 
$f=x_ix_{k+2}x_{2k+4}-\underline{x_{k+1}x_{k+1+i}x_{2k+3}}$ 
and $g=x_px_{k+2}x_{2k+4}-\underline{x_{k+1}x_{k+1+p}x_{2k+3}}$, 
where $i \neq p$. Then 
\begin{eqnarray*}
S(f,g)&=& -x_{k+1+p}f - (-x_{k+1+i})g \\
&=& -x_{k+2}x_{2k+4}(x_ix_{k+1+p}-x_px_{k+1+i}), 
\end{eqnarray*}
which is a multiple of type (1) generator. 
Next, let $f$ be the same one and 
$g=\underline{x_px_{2k+2}x_{2k+3}}-x_1x_{k+1+p}x_{2k+5}$. Then 
\begin{eqnarray*}
S(f,g)&=& -x_px_{2k+2}f - x_{k+1}x_{k+1+i}g \\
&=& -(x_ix_px_{k+2}x_{2k+2}x_{2k+4}-x_1x_{k+1}x_{k+1+i}x_{k+1+p}x_{2k+5}), 
\end{eqnarray*}
which is a generator of type (4) up to sign. 
Finally, let $f=\underline{x_ix_{2k+2}x_{2k+3}}-x_1x_{k+1+i}x_{2k+5}$ 
and $g=\underline{x_px_{2k+2}x_{2k+3}}-x_1x_{k+1+p}x_{2k+5}$, 
where $i \neq p$. Then 
\begin{eqnarray*}
S(f,g)&=&x_pf-x_ig \\
&=&x_1x_{2k+5}(x_ix_{k+1+p}-x_px_{k+1+i}). 
\end{eqnarray*}
{\bf Case 8:} Let $f$ be of type (3) 
and $g=\underline{x_px_qx_{k+2}x_{2k+2}x_{2k+4}}-x_1x_{k+1}x_{k+1+p}x_{k+1+q}x_{2k+5}$ 
be of type (4) with $p \leq q$. 
First, we set that $f=x_ix_{k+2}x_{2k+4}-\underline{x_{k+1}x_{k+1+i}x_{2k+3}}$. 
Then the leading monomials of $f$ and $g$ are coprime. 
Next, we set that $f=\underline{x_ix_{2k+2}x_{2k+3}}-x_1x_{k+1+i}x_{2k+5}$. 
If $i \not= p$ and $i \not= q$, then 
\begin{eqnarray*}
S(f,g)&=&x_px_qx_{k+2}x_{2k+4}f-x_ix_{2k+3}g \\
&=&x_1x_{2k+5}(\underline{x_ix_{k+1}x_{k+1+p}x_{k+1+q}x_{2k+3}}-x_px_qx_{k+2}x_{k+1+i}x_{2k+4}) \\
&=&x_1x_{2k+5}\{-x_ix_{k+1+q}(x_px_{k+2}x_{2k+4}-x_{k+1}x_{k+1+p}x_{2k+3}) \\
&\quad&\quad\quad +x_px_{k+2}x_{2k+4}(x_ix_{k+1+q}-x_qx_{k+1+i})\}. 
\end{eqnarray*}
Thus $S(f,g)$ reduce to 0 by generators of type (1) and (3). 
If $i=p$, then 
\begin{eqnarray*}
S(f,g)&=&x_qx_{k+2}x_{2k+4}f-x_{2k+3}g \\
&=&-x_1x_{k+1+i}x_{2k+5}(x_qx_{k+2}x_{2k+4}-x_{k+1}x_{k+1+q}x_{2k+3}). 
\end{eqnarray*}
The case of $i=q$ is similar. \\
{\bf Case 9:} Finally, we consider the case that both $f$ and $g$ are of type (4). 
Let $f=\underline{x_ix_jx_{k+2}x_{2k+2}x_{2k+4}}-x_1x_{k+1}x_{k+1+i}x_{k+1+j}x_{2k+5}$ and \\ 
$g=\underline{x_px_qx_{k+2}x_{2k+2}x_{2k+4}}-x_1x_{k+1}x_{k+1+p}x_{k+1+q}x_{2k+5}$, 
where $i \leq j$ and $p \leq q$. 
Without loss of generality, we may assume that $j \geq q.$ 
First, we assume that $j > q (\geq p)$. 
If $i \not= p$ and $i \not= q$, then
\begin{eqnarray*}
S(f,g)&=&x_px_qf-x_ix_jg \\
&=&x_1x_{k+1}x_{2k+5}(\underline{x_ix_jx_{k+1+p}x_{k+1+q}}-x_px_qx_{k+1+i}x_{k+1+j}) \\
&=&x_1x_{k+1}x_{2k+5}\{-x_ix_{k+1+q}(x_px_{k+1+j}-x_jx_{k+1+p})
+x_px_{k+1+j}(x_ix_{k+1+q}-x_qx_{k+1+i})\}. 
\end{eqnarray*}
Thus we have that $S(f,g)$ reduce to 0 by generators of type (1). 
If $i=p$, then 
\begin{eqnarray*}
S(f,g)&=&x_qf-x_jg \\
&=&x_1x_{k+1}x_{k+1+i}x_{2k+5}(x_jx_{k+1+q}-x_qx_{k+1+j}). 
\end{eqnarray*}
The case of $i=q$ is similar. 
Next, we consider the case where $j=q$. Then $i \neq p$ and 
\begin{eqnarray*}
S(f,g)&=&x_pf-x_ig \\
&=&x_1x_{k+1}x_{k+1+j}x_{2k+5}(x_ix_{k+1+p}-x_px_{k+1+i}), 
\end{eqnarray*}
which is a multiple of type (1) generator. 
\end{proof}

\begin{Corollary}
The initial ideal of $I_G$ with respect to $<_{\rev}$ 
is generated by the following monomials: 
\begin{eqnarray*}
&&x_jx_{k+1+i}, \quad 2 \leq i < j \leq k, \\ 
&&x_{k+1}x_{2k+2}x_{2k+3}^2, \\
&&x_{k+1}x_{k+1+r}x_{2k+3}, \  x_rx_{2k+2}x_{2k+3}, \quad 2 \leq r \leq k, \\
&&x_px_qx_{k+2}x_{2k+2}x_{2k+4}, \quad 2 \leq p \leq q \leq k. 
\end{eqnarray*}
\end{Corollary}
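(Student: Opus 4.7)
The plan is to derive this corollary as a direct consequence of Lemma \ref{Grobner}. Once we know that the binomials listed there form a Gr\"obner basis of $I_G$ with respect to $<_{\rev}$, the standard fact that the initial ideal equals the ideal generated by the leading terms of any Gr\"obner basis reduces the problem to reading off those leading terms.

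The main step is therefore to determine the leading monomial of each of the four types of binomial. Under the reverse lexicographic order with $x_1 > x_2 > \cdots > x_{2k+5}$, the comparison between two monomials of equal total degree reduces to locating the variable of \emph{largest} index at which their exponents differ: the monomial with the smaller exponent at that variable is the larger one. Applying this test to each type, for type (1) with $i<j$ the distinguishing variable is $x_{k+1+j}$, which appears only in $x_i x_{k+1+j}$, so the leading monomial is $x_j x_{k+1+i}$. For type (2) the distinguishing variable is $x_{2k+5}$, which appears only in $x_1 x_{k+2} x_{2k+4} x_{2k+5}$, so the leading monomial is $x_{k+1} x_{2k+2} x_{2k+3}^2$. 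The same test applied to the two subtypes of type (3) yields leading monomials $x_{k+1} x_{k+1+r} x_{2k+3}$ and $x_r x_{2k+2} x_{2k+3}$, and for type (4) it yields $x_p x_q x_{k+2} x_{2k+2} x_{2k+4}$.

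Collecting these leading monomials over the stated ranges of parameters produces exactly the list in the corollary. There is essentially no obstacle: the corollary is pure bookkeeping once Lemma \ref{Grobner} is in hand, and indeed the needed leading monomials are precisely the ones already underlined throughout the proof of that lemma.
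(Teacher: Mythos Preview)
Your proposal is correct and follows exactly the paper's approach: invoke Lemma \ref{Grobner} and read off the leading terms of the Gr\"obner basis elements. Your version is in fact more explicit than the paper's own two-sentence proof, since you spell out the reverse-lex comparison for each type rather than relying on the underlined monomials in the preceding lemma.
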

\begin{proof}
By Lemma \ref{Grobner}, the set of the binomials corresponding to 
primitive even closed walks (1), (2), (3) and (4) is a Gr\"{o}bner basis of $I_G$ 
with respect to $<_{\rev}$. 
Thus the leading terms with respect to $<_{\rev}$ 
generate the initial ideal of $I_G$. 
\end{proof}

For the rest part of this paper, 
we will denote by $I$, the initial ideal of $I_G$ with respect to $<_{\rev}$. 

\section{Proof of $\depth K[\xb]/I \leq 6$}

In this section, we will prove that $\depth K[\xb]/I \leq 6$. 
Since the number of edges of $G$, which coincides with $2k+5$, 
is equal to the number of variables of $K[\xb]$, 
Auslander--Buchsbaum formula implies that we may prove that $\pd K[\xb]/I \geq 2k-1$, 
where $\pd K[\xb]/I$ stands for the projective dimension of $K[\xb]/I$. 


First, we recall from \cite{CoCoA} 
the fundamental technique to compute the Betti numbers of 
(non-squarefree) monomial ideals. 

%
%
For a multi degree ${\bf a} \in \ZZ^n_{\geq 0}$, define 
$$
{\bf K}^{\bf a}(J)=\{\text{squarefree vectors } \alpha: {\bf x}^{{\bf a}-\alpha} \in J\}
$$
to be the \textit{Koszul simplicial complex} of $J$ in degree {\bf a}, 
where a squarefree vector $\a$ means that each entry of $\a$ is 0 or 1. 

\begin{Lemma}{\em (\cite[Theorem 1.34]{CoCoA})}\label{Koszul}
Let $S$ be a polynomial ring, $J$ a monomial ideal of $S$ 
and ${\bf a} \in \ZZ^n_{\geq 0}$ a vector. 
Then the Betti numbers of $J$ and $S/J$ in degree {\bf a} can be expressed as 
$$
\beta_{i,\ab}(J)=\beta_{i+1,\ab}(S/J)=\dim_K \Tilde{H}_{i-1}({\bf K}^{\ab}(J);K). 
$$
\end{Lemma}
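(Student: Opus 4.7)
This is a standard fact in the combinatorics of monomial ideals; my plan has three steps.

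\smallskip
\noindent\textbf{Step 1.} Establish $\beta_{i,\ab}(J)=\beta_{i+1,\ab}(S/J)$ from the Tor long exact sequence of $0\to J\to S\to S/J\to 0$. Since $\Tor^S_i(K,S)=0$ for $i\geq 1$, the connecting homomorphism gives a degree-preserving isomorphism $\Tor^S_{i+1}(K,S/J)\cong\Tor^S_i(K,J)$ in every multidegree $\ab$.

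\smallskip
\noindent\textbf{Step 2.} Compute $\beta_{j,\ab}(S/J)=\dim_K H_j(K^\bullet\otimes_S S/J)_\ab$ via the minimal $\ZZ^n$-graded Koszul resolution $K^\bullet=K(x_1,\ldots,x_n;S)$ of $K=S/\mm$. The $\ab$-strand of $K^\bullet\otimes_S S/J$ in homological degree $j$ has a $K$-basis indexed by those squarefree $\alpha\leq\ab$ of cardinality $j$ for which $\xb^{\ab-\alpha}\notin J$, and the Koszul differential becomes the signed simplicial-style boundary $[\alpha]\mapsto\sum_{\ell\in\alpha}\pm[\alpha\setminus\{\ell\}]$ with each summand vanishing whenever $\xb^{\ab-\alpha+e_\ell}\in J$.

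\smallskip
\noindent\textbf{Step 3.} Identify this chain complex with the simplicial homology of ${\bf K}^{\ab}(J)$, shifted in degree. Setting $T=\supp(\ab)$ and performing the involution $\alpha\mapsto T\setminus\alpha$ on the underlying squarefree vectors identifies the Koszul strand with the cochain complex of the Alexander dual $({\bf K}^{\ab}(J))^\vee$ on the vertex set $T$. Combinatorial Alexander duality for simplicial complexes on $|T|$ vertices then yields
\[
H_j(K^\bullet\otimes_S S/J)_\ab=\tilde H_{j-2}\bigl({\bf K}^{\ab}(J);K\bigr),
\]
and substituting $j=i+1$ produces the second equality of the lemma.

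\smallskip
The main obstacle is the sign bookkeeping in the passage from Step~2 to Step~3: one must fix consistent orientation conventions so that the Koszul differential on the complement $\{\alpha:\xb^{\ab-\alpha}\notin J\}$ matches the simplicial boundary on the Alexander dual $({\bf K}^{\ab}(J))^\vee$ on the nose. Once this is done, the remaining identification is formal homological algebra and the claimed formula follows immediately.
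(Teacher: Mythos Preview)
The paper does not prove this lemma at all: it is quoted verbatim from Miller--Sturmfels \cite[Theorem~1.34]{CoCoA} and used as a black box, so there is no ``paper's own proof'' to compare against.  Your three-step outline is a correct proof of the cited result.

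One remark on Step~3: the detour through Alexander duality works, but it is slightly more elaborate than necessary.  A more direct route is to observe that the full degree-$\ab$ strand $(K^\bullet)_\ab$ is, up to the shift $j\leftrightarrow j-1$, the augmented simplicial chain complex of the full simplex on $T=\supp(\ab)$, and that the sub-strand coming from $J$ (i.e.\ those $\alpha$ with $\xb^{\ab-\alpha}\in J$) is precisely the chain complex of $\mathbf{K}^{\ab}(J)$.  Hence $(K^\bullet\otimes S/J)_\ab$ is the relative chain complex of the pair $(\text{simplex},\mathbf{K}^{\ab}(J))$, and the long exact sequence of the pair together with contractibility of the simplex gives $H_j\cong\tilde H_{j-2}(\mathbf{K}^{\ab}(J);K)$ immediately, with no sign bookkeeping beyond the usual simplicial conventions.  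Your Alexander-duality argument reaches the same conclusion and is perfectly valid; it just trades one elementary long exact sequence for a slightly heavier duality statement.
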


%

By virtue of Lemma \ref{Koszul}, 
in order to prove that $\pd K[\xb]/I \geq 2k-1$, 
we may show the following 

\begin{Lemma}\label{main1}
Let $\ab=\sum_{j=2}^k(\eb_j+\eb_{k+1+j})+\eb_{k+1}+\eb_{2k+2}+2\eb_{2k+3} \in \ZZ^{2k+5}_{\geq 0}$, 
where $\eb_i \in \RR^{2k+5}$ is the $i$-th unit vector of $\RR^{2k+5}$. Then 
$$\dim_K \Tilde{H}_{2k-3}({\bf K}^{\ab}(I);K) \not= 0.$$
\end{Lemma}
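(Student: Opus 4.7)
The plan is to invoke combinatorial Alexander duality and then exhibit an explicit nonzero class in a $1$-dimensional homology group. Write $\Delta := {\bf K}^{\ab}(I)$; its vertex set $V := \supp(\ab)$ has cardinality $2k+1$. To shorten notation, set $A_j := j$ for $2 \le j \le k$, $B_i := k+1+i$ for $2 \le i \le k$, $u := k+1$, $v := 2k+2$, and $w := 2k+3$, so $V = \{A_2, \ldots, A_k, B_2, \ldots, B_k, u, v, w\}$. Reading off the four families of generators in the Corollary, one verifies that $\xb^{\ab - F}$ is divisible by some generator of $I$ precisely when $V \setminus F$ contains one of the subsets
\[
\{A_j, B_i\}\ (i<j), \qquad \{u, B_r\}, \qquad \{A_r, v\}, \qquad \{u, v, w\}.
\]
Dualizing, $X \subseteq V$ is a face of the Alexander dual $\Delta^\vee$ iff (a) whenever $A_j, B_i \in X$ one has $i \ge j$; (b) $u \in X$ implies $X \cap B = \emptyset$; (c) $v \in X$ implies $X \cap A = \emptyset$; (d) $\{u, v, w\} \not\subseteq X$.

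Combinatorial Alexander duality gives $\Tilde{H}_{2k-3}(\Delta; K) \cong \Tilde{H}_1(\Delta^\vee; K)$, reducing the task to exhibiting a nontrivial $1$-cycle in $\Delta^\vee$.

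I would then observe that each of the edges $\{u,w\}, \{w,v\}, \{u,v\}$ satisfies (a)--(d) and is therefore a face of $\Delta^\vee$, while the triangle $\{u,v,w\}$ is forbidden by (d). Hence
\[
\gamma := [u, w] + [w, v] + [v, u]
\]
is a $1$-cycle of $\Delta^\vee$. To see that $\gamma$ is not a $1$-boundary, it is enough to check that no $2$-simplex of $\Delta^\vee$ contains the edge $\{u,v\}$: the only three candidates are $\{u,v,A_r\}$, $\{u,v,B_r\}$, and $\{u,v,w\}$, each ruled out by (c), (b), (d) respectively. So every $2$-boundary has vanishing coefficient on $[u,v]$, whereas $\gamma$ has coefficient $\pm 1$ there, so $\gamma$ defines a nonzero class in $\Tilde{H}_1(\Delta^\vee; K)$.

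The genuine work is concentrated in the very first step: verifying, generator by generator from the Corollary, that the divisibility condition $\xb^{\ab-F}\in(g)$ translates exactly into ``$V\setminus F$ contains one of the listed subsets''. Once that combinatorial bookkeeping is done, conditions (a)--(d) on $\Delta^\vee$ are immediate, the cycle $\gamma$ is manifestly a ``hollow triangle'', and its nontriviality reduces to the one-line check that no face of $\Delta^\vee$ fills in the edge $\{u,v\}$ by adjoining a third vertex.
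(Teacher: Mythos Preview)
Your argument is correct and complete, but it follows a genuinely different route from the paper's. The paper works directly inside $\Delta = {\bf K}^{\ab}(I)$: it separates the contributing generators into the squarefree ones (giving a subcomplex $\Delta_1$) and the single nonsquarefree generator $x_{k+1}x_{2k+2}x_{2k+3}^2$ (giving a $(2k-3)$-simplex $\Delta_2$), observes that $\Delta_1$ is a cone over the vertex $2k+3$, and then applies Mayer--Vietoris to obtain $\tilde H_i(\Delta)\cong\tilde H_{i-1}(\Delta_1\cap\Delta_2)$. A direct check shows $\Delta_1\cap\Delta_2$ is the full boundary of the simplex $\Delta_2$, hence a $(2k-4)$-sphere, and the result follows. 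Your approach instead dualizes: Alexander duality on the ground set $V$ (of size $2k+1$) converts the top-dimensional question into $\tilde H_1(\Delta^\vee)$, where the obstruction becomes the visible ``hollow triangle'' on $\{u,v,w\}$. Both proofs pivot on the same combinatorial fact---the nonsquarefree generator $x_{k+1}x_{2k+2}x_{2k+3}^2$ is what creates the extra homology---but they package it differently: in the paper it appears as the simplex $\Delta_2$ whose boundary is not filled inside $\Delta_1$, while in your argument it appears as the forbidden $2$-face $\{u,v,w\}$ in the dual. Your route is arguably cleaner once the face description of $\Delta^\vee$ is in hand (and your observation that $\{u,v\}$ lies in no $2$-face of $\Delta^\vee$ is a particularly economical way to certify nontriviality), at the price of importing combinatorial Alexander duality; the paper's Mayer--Vietoris argument is more self-contained and incidentally identifies \emph{all} of $\tilde H_*(\Delta)$ via the sphere $\Delta_1\cap\Delta_2$.
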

\begin{proof}
Let $\Delta$ be the simplicial complex on the vertex set $[2k+5]$ which 
is obtained by identifying a squarefree vector $\alpha \in {\bf K}^{\ab} (I)$ 
with the set of coordinates where the entries of $\alpha$ are $1$. 
To prove the assertion, we may show that 
$\dim_K \tilde{H}_{2k-3} ({\Delta}; K) \neq 0$. 
Let $I_1$ (resp.\  $I_2$) be the monomial ideal generated by the monomials 
\begin{eqnarray*}
&&x_jx_{k+1+i}, \quad 2 \leq i < j \leq k, \\
&&x_{k+1}x_{k+1+r}x_{2k+3}, \  x_rx_{2k+2}x_{2k+3}, \quad 2 \leq r \leq k 
\end{eqnarray*}
(resp.\  by the monomial $x_{k+1}x_{2k+2}x_{2k+3}^2)$. 
We denote by $\Delta_1,\Delta_2$, the subcomplexes of $\Delta$ 
corresponding to ${\bf K}^{\ab} (I_1), {\bf K}^{\ab} (I_2)$, respectively. 
Since the $(k+2)$-th entry of $\ab$ is equal to 0, 
one has $\Delta=\Delta_1 \union \Delta_2$. 
Moreover, one can verify that all the facets of $\Delta_1$ 
contain a common vertex $2k+3$. 
In other words, $\Delta_1$ is a cone over some simplicial complex. 
In addition, $\Delta_2$ has only one facet 
$$\{2,3,\ldots,k,k+3,k+4,\ldots,2k+1\},$$ 
which is a $(2k-3)$-dimensional simplex. 
Thus the reduced homologies of both of $\Delta_1$ and $\Delta_2$ 
all vanish. Hence the Mayer--Vietoris sequence 
\begin{displaymath}
\begin{aligned}
\cdots 
&\longrightarrow \tilde{H}_{i} (\Delta_1 \cap \Delta_2; K)
\longrightarrow \tilde{H}_i (\Delta_1; K) \oplus \tilde{H}_i (\Delta_2; K)
\longrightarrow \tilde{H}_i (\Delta; K) \\
&\longrightarrow \tilde{H}_{i-1} (\Delta_1 \cap \Delta_2; K)
\longrightarrow 
\tilde{H}_{i-1} (\Delta_1; K) \oplus \tilde{H}_{i-1} (\Delta_2; K)
\longrightarrow \cdots
\end{aligned}
\end{displaymath}
yields 
\begin{displaymath}
\tilde{H}_i (\Delta; K) \cong \tilde{H}_{i-1} (\Delta_1 \cap \Delta_2; K)
\qquad \text{for all $i$.}
\end{displaymath}

Now we note that subsets 
\begin{eqnarray*}
&&\{2,3,\ldots,k,k+3,k+4,\ldots,2k+1\} \setminus \{i\}, \quad\quad i=2,\ldots,k, \\
&&\{2,3,\ldots,k,k+3,k+4,\ldots,2k+1\} \setminus \{k+1+j\}, \quad\quad j=2,\ldots,k 
\end{eqnarray*}
are faces of $\Delta_1$ and $\{2,3,\ldots,k,k+3,k+4,\ldots,2k+1\}$ 
is not a face of $\Delta_1$. Thus the above subsets are 
the facets of $\Delta_1 \cap \Delta_2$. 
In particular, one has $\dim (\Delta_1 \cap \Delta_2)=2k-4$. 
Since $\Delta_1 \cap \Delta_2$ contains all facets of 
the $(2k-3)$-dimensional simplex $\Delta_2$, 
the geometric realization of $\Delta_1 \cap \Delta_2$ is homeomorphic to 
the boundary complex of the simplex $\Delta_2$, i.e., 
$\Delta_1 \cap \Delta_2$ is a simplicial $(2k-4)$-sphere. 

Therefore, one has 
$\dim_K\tilde{H}_{2k-3} (\Delta; K) 
= \dim_K\tilde{H}_{2k-4} (\Delta_1 \cap \Delta_2; K) 
\not= 0.$ 
\end{proof}

\section{Proof of $\depth K[\xb]/I \geq 6$}


In this section, we will prove the following 
\begin{Lemma}\label{main2}
$$\depth K[\xb]/I \geq 6.$$
\end{Lemma}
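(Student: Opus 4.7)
The plan is to invoke the Auslander--Buchsbaum formula and prove the equivalent bound $\pd K[\xb]/I \leq 2k-1$. By Lemma~\ref{Koszul}, this amounts to showing that
\[
\tilde H_j({\bf K}^{\ab}(I);K)=0 \qquad \text{for every } \ab \in \ZZ^{2k+5}_{\geq 0} \text{ and every } j \geq 2k-2.
\]
A first reduction: $x_1$ and $x_{2k+5}$ appear in no generator of $I$, so $I$ is extended from a monomial ideal in the subring $R = K[x_2, \ldots, x_{2k+4}]$, whence $\depth K[\xb]/I = 2 + \depth R/I$ and it suffices to prove $\depth R/I \geq 4$. Only multi-degrees $\ab$ with $\ab_1 = \ab_{2k+5} = 0$ are relevant for the remaining Koszul vanishing.

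For the core argument I would fix such an $\ab$ and mimic the Mayer--Vietoris decomposition of Lemma~\ref{main1}, now carried out uniformly in $\ab$. Split $I = I_1 + I_2$ with $I_2 = (x_{k+1}x_{2k+2}x_{2k+3}^2)$ and $I_1$ generated by types (1), (3) and (4), and set $\Delta_\nu := {\bf K}^{\ab}(I_\nu)$ so that ${\bf K}^{\ab}(I) = \Delta_1 \cup \Delta_2$. Since $I_2$ is principal, $\Delta_2$ is either void or a simplex; in either case its reduced homology vanishes in positive degrees, and $\Delta_2$ is nonempty only when $\ab_{k+1},\ab_{2k+2} \geq 1$ and $\ab_{2k+3} \geq 2$. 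For $\Delta_1$ the plan is to identify $2k+3$ as an apex and exhibit a cone structure on $\Delta_1$, exactly as in Lemma~\ref{main1}, whenever $\ab_{2k+3} \geq 1$; the degenerate case $\ab_{2k+3} = 0$ makes type (3) inactive, and the argument is finished by using a cone over $k+2$ or $2k+4$ coming from type (4). With $\Delta_1$ and $\Delta_2$ both acyclic, Mayer--Vietoris delivers $\tilde H_j({\bf K}^{\ab}(I);K) \cong \tilde H_{j-1}(\Delta_1 \cap \Delta_2; K)$ in the relevant range.

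The main obstacle is to prove $\tilde H_l(\Delta_1 \cap \Delta_2; K) = 0$ for $l \geq 2k-3$ and every relevant $\ab$. Here the analogy with Lemma~\ref{main1} (where $\Delta_1 \cap \Delta_2$ is the boundary of the $(2k-3)$-simplex on $\{2,\ldots,k,k+3,\ldots,2k+1\}$) breaks down for multi-degrees other than the distinguished one: the vertex set of $\Delta_1 \cap \Delta_2$ depends on which coordinates of $\ab$ satisfy the thresholds $\ab_{k+1}\geq 2$, $\ab_{2k+2}\geq 2$, $\ab_{2k+3}\geq 3$, and the combinatorial description of its facets must be worked out case by case. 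The hard step will be to show, for each such case, that $\Delta_1 \cap \Delta_2$ either admits an additional cone structure (and is hence acyclic in all positive degrees) or is of dimension at most $2k-4$ with vanishing top-degree reduced homology. Combining this with the Mayer--Vietoris isomorphism above then yields the required vanishing of $\tilde H_j({\bf K}^{\ab}(I);K)$ for $j \geq 2k-2$, completing the proof.
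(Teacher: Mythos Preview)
Your approach is genuinely different from the paper's, and as written it has real gaps.

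The paper does not touch Koszul simplicial complexes for this direction. It proves $\depth K[\xb]/I \geq 6$ directly via iterated use of the depth lemma. One writes $I = I_1 + I_2 + I_3 + I_4 + I_5$ (with $I_1$ the type-(1) monomials, $I_2$ the single type-(2) monomial, $I_3,I_4$ the two type-(3) families, $I_5$ the type-(4) family), builds $J_1 = I_3+I_4$, $J_2 = J_1+I_1$, $J_3 = J_2+I_5$, $I = J_3+I_2$, and at each stage applies
\[
\depth S/(A+B)\ \geq\ \min\{\depth S/A,\ \depth S/B,\ \depth S/(A\cap B)-1\}
\]
after computing $A\cap B$ explicitly (Lemma~\ref{ideals}) and bounding the depths by counting how many variables occur in a minimal generating set (Lemma~\ref{syzygy}). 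No topology is required, and nothing depends on a multi-degree $\ab$.

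Regarding your outline, there are two concrete problems beyond the fact that you leave the ``hard step'' entirely unproved.

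First, your cone claims for $\Delta_1$ are false as stated. In Lemma~\ref{main1} the apex $2k+3$ works because $\ab_{2k+3}=2$: a face witnessed by a type-(3) generator (which carries $x_{2k+3}$ to the first power) can still be extended by $2k+3$ since $\ab_{2k+3}-1\geq 1$. For $\ab_{2k+3}=1$ this breaks: with $\ab=e_{k+1}+e_{k+1+r}+e_{2k+3}$ one has $\emptyset\in\Delta_1$ but $\{2k+3\}\notin\Delta_1$. The same obstruction hits your fallback in the case $\ab_{2k+3}=0$: with $\ab=2e_{2}+e_{k+2}+e_{2k+2}+e_{2k+4}$ the complex $\Delta_1$ is nonempty (via type (4)) but neither $\{k+2\}$ nor $\{2k+4\}$ is a face, so $\Delta_1$ is not a cone over either vertex. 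In these particular examples $\Delta_1=\{\emptyset\}$, so the desired vanishing happens to hold, but not for the reason you assert.

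Second, and more seriously, even granting acyclicity of $\Delta_1$ and $\Delta_2$, you must control $\tilde H_{l}(\Delta_1\cap\Delta_2;K)$ for $l\geq 2k-3$ uniformly in $\ab$. Unlike in Lemma~\ref{main1}, where the intersection is the boundary of a single $(2k-3)$-simplex, here its facet structure depends on the thresholds $\ab_{k+1}\geq 2$, $\ab_{2k+2}\geq 2$, $\ab_{2k+3}\geq 3$ and on which of the coordinates $2,\ldots,k,k+3,\ldots,2k+1,k+2,2k+4$ are present; the resulting case analysis is substantial and you have not supplied it. Without either a uniform combinatorial argument or a reduction to finitely many $\ab$ (e.g.\ via the lcm-lattice), what you have is a plan, not a proof. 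The paper's short-exact-sequence route avoids all of this.
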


Before proving Lemma \ref{main2}, we prepare the following two lemmas. 

Let $J \subset S = K[x_1,\ldots,x_n]$ be a monomial ideal. 
We denote by $G(J)$, the minimal set of monomial generators of $J$. 

\begin{Lemma}\label{syzygy}
Let $S=K[x_1,\ldots,x_n]$ be the polynomial ring in $n$ variables 
and $J \subset S$ a monomial ideal of $S$. 
\begin{enumerate}
\item[{\em (a)}] 
If only $m (\leq n)$ variables appear in the elements of $G(J)$, 
then $\depth S/J \geq n-m$. 
\item[{\em (b)}] 
If only $m$ variables appear in the elements of $G(J)$ 
and the variables $x_{i_1},\ldots,x_{i_r}$ 
do not appear 
in there, 
then $\depth S/J' \geq n-m$, where 
$J'=x_{i_1} \cdots x_{i_r}J$. 
\end{enumerate}
\end{Lemma}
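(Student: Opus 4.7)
The plan is to prove (a) directly by exhibiting a regular sequence, and then reduce (b) to (a) by means of a short exact sequence induced by multiplication by $u = x_{i_1}\cdots x_{i_r}$.

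For part (a), after relabeling I may assume that the variables appearing in $G(J)$ are exactly $x_1,\ldots,x_m$. Let $S_0 = K[x_1,\ldots,x_m]$ and let $J_0 \subset S_0$ be the monomial ideal with the same generators as $J$. The natural identification $S/J \cong (S_0/J_0)[x_{m+1},\ldots,x_n]$ shows that $x_{m+1},\ldots,x_n$ form a regular sequence on $S/J$. Hence $\depth S/J \geq n-m$.

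For part (b), set $u = x_{i_1}\cdots x_{i_r}$, so that $J' = uJ$. Since $S$ is a domain and none of the variables $x_{i_j}$ divides any generator of $J$, one verifies that $(J':u) = J$ and $J' + (u) = (u)$. This gives the short exact sequence
$$0 \longrightarrow S/J \xrightarrow{\;\cdot u\;} S/J' \longrightarrow S/(u) \longrightarrow 0.$$
Because $(u)$ is principal with free resolution $0 \to S \xrightarrow{u} S \to S/(u) \to 0$, the Auslander--Buchsbaum formula yields $\depth S/(u) = n-1$. Combining part (a) with the depth lemma applied to the displayed sequence,
$$\depth S/J' \geq \min\bigl\{\depth S/J,\ \depth S/(u)\bigr\} \geq \min\{n-m,\, n-1\} = n-m,$$
where the last equality holds as long as $m \geq 1$; the trivial case $m = 0$ gives $J = J' = 0$ and $\depth S/J' = n$.

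The argument is essentially bookkeeping and I expect no serious obstacle. The only subtle point is verifying the colon identity $(J':u) = J$: given $g \in S$ with $gu \in uJ$, one writes $gu = \sum_j h_j u f_j$ with $f_j \in G(J)$, and then $u(g - \sum_j h_j f_j) = 0$ in the domain $S$ forces $g = \sum_j h_j f_j \in J$, as required.
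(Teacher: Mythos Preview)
Your proof is correct. Part (a) is identical to the paper's argument.

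For part (b), you take a different and slightly cleaner route. The paper sets $J'' = (u)$, observes $J' = J \cap J''$, and uses the Mayer--Vietoris type sequence
\[
0 \longrightarrow S/(J \cap J'') \longrightarrow S/J \oplus S/J'' \longrightarrow S/(J + J'') \longrightarrow 0,
\]
which requires computing $\depth S/(J + (u)) = \depth S/J - 1$ via the fact that $u$ is a nonzerodivisor on $S/J$. Your multiplication-by-$u$ sequence
\[
0 \longrightarrow S/J \xrightarrow{\;\cdot u\;} S/J' \longrightarrow S/(u) \longrightarrow 0
\]
bypasses that step entirely: once $(J':u)=J$ and $J'+(u)=(u)$ are noted (both immediate since $S$ is a domain and $uJ \subset (u)$), the depth lemma gives the bound directly from $\depth S/J$ and $\depth S/(u)=n-1$. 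Both arguments are short, but yours has one fewer term to control. Incidentally, your verification of $(J':u)=J$ can be shortened: $ug \in uJ$ and $u$ a nonzerodivisor in $S$ already force $g \in J$, with no need to expand in terms of generators.
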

\begin{proof}
Without loss of generality, we may assume that 
only the variables $x_1,\ldots,x_m$ appear in the elements of $G(J)$. \\
(a) Since the variables $x_{m+1},\ldots,x_n$ do not appear in the elements of 
$G(J)$, the sequence $x_{m+1},\ldots,x_n$ is an $S/J$-regular sequence. 
Thus one has $\depth S/J \geq n-m$. \\
(b) Set $x_{i_{\ell}}=x_{m+\ell}$ for $\ell=1,\ldots,r$ 
and $J''=(x_{m+1} \cdots x_{m+r}) \subset S$. 
Then, by the short exact sequence 
$0 \rightarrow S/J \cap J'' \rightarrow S/J \oplus S/J'' 
\rightarrow S/(J + J'') \rightarrow 0$, we have 
$$
\depth S/J' = \depth S/J \cap J'' \geq 
\min\{\depth S/J, \depth S/J'', \depth S/(J+J'')+1\}. 
$$
Now we have $\depth S/J \geq n-m$ by (a) and $\depth S/J'' = n-1$. In addition, 
since $x_{m+1},\ldots,x_{m+r}$ do not appear in the elements of $G(J)$, 
the monomial $x_{m+1} \cdots x_{m+r}$ is an $S/J$-regular element. 
Hence one has $\depth S/(J+J'') = \depth S/J-1 \geq n-m-1$. 
\end{proof}


Let 
\begin{eqnarray*}
&&I_1=(x_jx_{k+1+i}: 2 \leq i < j \leq k), \\
&&I_2=(x_{k+1}x_{2k+2}x_{2k+3}^2), \\
&&I_3=x_{2k+2}x_{2k+3}(x_2,x_3,\ldots,x_k), \\
&&I_4=x_{k+1}x_{2k+3}(x_{k+3},x_{k+4},\ldots,x_{2k+1}), \\
&&I_5=x_{k+2}x_{2k+2}x_{2k+4}(x_2,x_3,\ldots,x_k)^2. 
\end{eqnarray*}
Then $I=I_1+I_2+\cdots+I_5$. 

The following lemma can be obtained by elementary computations. 
\begin{Lemma}\label{ideals}
Let $J_1=I_3+I_4$, $J_2=J_1+I_1$ and $J_3=J_2+I_5$. Then \\
{\em (a)} $I_3 \cap I_4 = x_{k+1}x_{2k+2}x_{2k+3}(x_2,\ldots,x_k)(x_{k+3},\ldots,x_{2k+1})$. \\
{\em (b)} $J_1 \cap I_1 = x_{2k+3}(x_{k+1},x_{2k+2})I_1.$ \\
{\em (c)} $J_2 \cap I_5 = x_{k+2}x_{2k+2}x_{2k+4}(x_2,\ldots,x_k)(x_{2k+3}(x_2,\ldots,x_k)+I_1).$ \\
{\em (d)} $J_3 \cap I_2 = x_{k+1}x_{2k+2}x_{2k+3}^2(x_2,\ldots,x_k,x_{k+3},\ldots,x_{2k+1}).$ 
\end{Lemma}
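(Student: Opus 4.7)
The proof of Lemma~\ref{ideals} is a direct calculation based on the standard formula that, for monomial ideals $A$ and $B$ with minimal monomial generating sets $G(A)$ and $G(B)$, one has
\[
A \cap B = (\lcm(u,v) : u \in G(A),\, v \in G(B)).
\]
My plan is to record the variables appearing in each $I_i$ and then verify each of (a)--(d) by computing these $\lcm$'s, matching generating sets on both sides.

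The structural observation that makes the computation tractable is the near-disjointness of the variable supports. Namely, $I_1$ is supported on $\{x_2,\ldots,x_k,x_{k+3},\ldots,x_{2k+1}\}$, $I_2$ on $\{x_{k+1},x_{2k+2},x_{2k+3}\}$, $I_3$ on $\{x_2,\ldots,x_k,x_{2k+2},x_{2k+3}\}$, $I_4$ on $\{x_{k+1},x_{k+3},\ldots,x_{2k+1},x_{2k+3}\}$, and $I_5$ on $\{x_2,\ldots,x_k,x_{k+2},x_{2k+2},x_{2k+4}\}$. Consequently, for most cross-pairs of generators the overlapping variables form a very short list and the $\lcm$ reduces to a fixed prefactor times the product of the two generators.

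For (a), $I_3$ and $I_4$ share only $x_{2k+3}$ and the sets $\{x_2,\ldots,x_k\}$ and $\{x_{k+3},\ldots,x_{2k+1}\}$ are disjoint, so $\lcm(x_{2k+2}x_{2k+3}x_j,\, x_{k+1}x_{2k+3}x_{k+1+i}) = x_{k+1}x_{2k+2}x_{2k+3}x_j x_{k+1+i}$, yielding the claimed generators exactly. For (b), $\lcm$'s of $I_1$ generators against $I_3$ generators lie in $x_{2k+2}x_{2k+3} I_1$ while those against $I_4$ generators lie in $x_{k+1}x_{2k+3} I_1$, proving $J_1 \cap I_1 \subseteq x_{2k+3}(x_{k+1},x_{2k+2}) I_1$; the reverse containment is immediate, since any element of $x_{2k+2}x_{2k+3} I_1$ is visibly in both $I_3$ and $I_1$, and similarly for the $x_{k+1}x_{2k+3}$ piece.

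For (c), with the generic $I_5$ generator written as $x_{k+2}x_{2k+2}x_{2k+4}x_a x_b$ for $a,b \in [2,k]$, the $\lcm$'s with generators of $I_3$ and $I_4$ both land in $x_{k+2}x_{2k+2}x_{2k+4}(x_2,\ldots,x_k) \cdot x_{2k+3}(x_2,\ldots,x_k)$: for $I_3$ this is transparent, and for $I_4$ one rewrites $x_{k+1}x_{2k+3}x_{k+1+i}x_a = x_{2k+3} \cdot (x_{k+1}x_a x_{k+1+i})$ and absorbs $x_a$ into the second $(x_2,\ldots,x_k)$ factor. The $\lcm$'s with $I_1$ generators land directly in $x_{k+2}x_{2k+2}x_{2k+4}(x_2,\ldots,x_k) I_1$. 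The reverse containment is checked by splitting the RHS into its two pieces and exhibiting each as being simultaneously in $J_2$ and in $I_5$. Part (d) is the easiest: $I_2$ has the single generator $x_{k+1}x_{2k+2}x_{2k+3}^2$, and its $\lcm$ with every generator of $J_3 = I_1+I_3+I_4+I_5$ is precisely $x_{k+1}x_{2k+2}x_{2k+3}^2$ times one additional variable drawn from $\{x_2,\ldots,x_k,x_{k+3},\ldots,x_{2k+1}\}$, matching the stated generating set. The main obstacle, such as it is, is the bookkeeping in (c), because $J_2$ has three distinct families of generators and $(x_2,\ldots,x_k)^2$ allows the repeated case $a = b$; the variable disjointness is what keeps the resulting case analysis short and uniform.
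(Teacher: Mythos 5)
Your computation is correct and is precisely the ``elementary computations'' that the paper invokes for Lemma~\ref{ideals} without writing them out: the lcm-of-generators description of an intersection of monomial ideals, applied summand by summand to $J_1$, $J_2$, $J_3$, is clearly the intended route, and your case analysis checks out in all four parts. The only nit is in (d), where the lcm of $x_{k+1}x_{2k+2}x_{2k+3}^2$ with a generator of $I_1$ or of $I_5$ involves more than ``one additional variable''; but each such lcm is a multiple of one of the stated generators $x_{k+1}x_{2k+2}x_{2k+3}^2x_c$, so the generated ideal is unaffected and the conclusion stands.
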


Now we will prove Lemma \ref{main2}.
\begin{proof}[Proof of Lemma \ref{main2}]
Work with the same notations as in Lemma \ref{ideals}. 
By the short exact sequence 
$$0 \rightarrow K[\xb]/J_3 \cap I_2 \rightarrow K[\xb]/J_3 \oplus K[\xb]/I_2 
\rightarrow K[\xb]/(J_3 + I_2) \rightarrow 0,$$ one has 
\begin{displaymath}
  \begin{aligned}
    \depth K[\xb]/I 
    &= \depth K[\xb]/(J_3 + I_2) \\
    &\geq \min\{\depth K[\xb]/J_3,\depth K[\xb]/I_2, \depth K[\xb]/J_3 \cap I_2-1\}. 
  \end{aligned}
\end{displaymath}
Thus what we must prove is that the inequalities 
$\depth K[\xb]/J_3 \geq 6$, $\depth K[\xb]/I_2 \geq 6$ 
and $\depth K[\xb]/J_3 \cap I_2 \geq 7.$ 
Obviously, $\depth K[\xb]/I_2=2k+4 \geq 6$. 
Moreover, by Lemmas \ref{ideals} (d) and \ref{syzygy} (b), 
we can easily see that $\depth K[\xb]/J_3 \cap I_2 \geq (2k+5)-2(k-1) =7$. 
Thus we investigate $\depth K[\xb]/J_3$. 

{\bf (First step)} 
By the short exact sequence 
$$0 \rightarrow K[\xb]/I_3 \cap I_4 \rightarrow K[\xb]/I_3 \oplus K[\xb]/I_4 
\rightarrow K[\xb]/(I_3 + I_4) \rightarrow 0,$$ one has 
\begin{displaymath}
  \begin{aligned}
    \depth K[\xb]/J_1 
    &=\depth K[\xb]/(I_3 + I_4) \\
    &\geq \min\{\depth K[\xb]/I_3,\depth K[\xb]/I_4, \depth K[\xb]/I_3 \cap I_4-1\}.
  \end{aligned}
\end{displaymath}
By Lemma \ref{syzygy} (b), one has 
$\depth K[\xb]/I_3 \geq k+6 \geq 6$ and $\depth K[\xb]/I_4 \geq k+6 \geq 6$. 
Since $I_3 \cap I_4 
= x_{k+1}x_{2k+2}x_{2k+3}(x_2,\ldots,x_k)(x_{k+3},\ldots,x_{2k+1})$ 
by Lemma \ref{ideals} (a) and $x_{k+1},x_{2k+2},x_{2k+3}$ do not appear 
in the elements of $G((x_2,\ldots,x_k)(x_{k+3},\ldots,x_{2k+1}))$, 
one has $\depth K[\xb]/I_3 \cap I_4 \geq (2k+5) - 2(k-1) = 7$ by Lemma \ref{syzygy} (b). 
Hence one has $\depth K[\xb]/J_1 \geq 6$. 

{\bf (Second step)} 
Again, by the short exact sequence 
$$0 \rightarrow K[\xb]/J_1 \cap I_1 \rightarrow K[\xb]/J_1 \oplus K[\xb]/I_1 
\rightarrow K[\xb]/(J_1 + I_1) \rightarrow 0,$$ one has 
\begin{displaymath}
  \begin{aligned}
    \depth K[\xb]/J_2 &=\depth K[\xb]/(J_1 + I_1) \\
    &\geq \min\{\depth K[\xb]/J_1,\depth K[\xb]/I_1, \depth K[\xb]/J_1 \cap I_1-1\}.
  \end{aligned}
\end{displaymath}
By Lemma \ref{syzygy} (a), $\depth K[\xb]/I_1 \geq (2k+5)-2(k-2) \geq 6$. 
Also by Lemma \ref{ideals} (b), one has 
$J_1 \cap I_1 = x_{2k+3}(x_{k+1},x_{2k+2})I_1.$ 
Since only $2k-2$ variables appear in the elements of 
$G((x_{k+1},x_{2k+2})I_1)$, 
and $x_{2k+3}$ does not appear in there, 
one has 
$\depth K[\xb]/J_1 \cap I_1 \geq 7$ by Lemma \ref{syzygy} (b). 
In addition, one has $\depth K[\xb]/J_1 \geq 6$ by the first step. 
Hence one has $\depth K[\xb]/J_2 \geq 6$. 

{\bf (Third step)} 
Similarly, by the short exact sequences 
$$0 \rightarrow K[\xb]/J_2 \cap I_5 \rightarrow K[\xb]/J_2 \oplus K[\xb]/I_5 
\rightarrow K[\xb]/(J_2 + I_5) \rightarrow 0,$$ one has 
\begin{displaymath}
  \begin{aligned}
    \depth K[\xb]/J_3 &= \depth K[\xb]/(J_2 + I_5) \\
    &\geq \min\{\depth K[\xb]/J_2,\depth K[\xb]/I_5, \depth K[\xb]/J_2 \cap I_5-1\}.
  \end{aligned}
\end{displaymath}
By Lemma \ref{syzygy} (b), one has $\depth K[\xb]/I_5 \geq k+6 \geq 6$. 
For $\depth K[\xb]/J_2 \cap I_5$, by Lemma \ref{ideals} (c), one has 
$J_2 \cap I_5 = x_{k+2}x_{2k+2}x_{2k+4}(x_2,\ldots,x_k)(x_{2k+3}(x_2,\ldots,x_k)+I_1).$
Notice that only $2k-2$ variables appear and 
$x_{k+2},x_{2k+2},x_{2k+4}$ do not appear in the elements of 
$G((x_2,\ldots,x_k)(x_{2k+3}(x_2,\ldots,x_k)+I_1))$. 
Thus, again by Lemma \ref{syzygy} (b), 
one has $\depth K[\xb]/J_2 \cap I_5 \geq 7$. 
Combining these results with the second step, 
one has $\depth K[\xb]/J_3 \geq 6$. 

Therefore, one has $\depth K[\xb]/I \geq 6$, as required. 
\end{proof}

\section{Cohen--Macaulayness of $K[\xb]/\ini_{<_{\lex}}(I_G)$}

In this section, we will prove the following 
\begin{Lemma}\label{main3}
Let $<_{\lex}$ denote the lexicographic order on $K[\xb]$ induced by 
the ordering $x_1 > \cdots > x_{2k+5}$ of the variables. 
Then $K[\xb]/\ini_{<_{\lex}}(I_G)$ is Cohen--Macaulay. 
\end{Lemma}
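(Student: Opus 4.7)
The plan is to prove Lemma \ref{main3} by first identifying $\ini_{<_{\lex}}(I_G)$ as a squarefree monomial ideal, then verifying Cohen--Macaulayness of the associated Stanley--Reisner complex via shellability.

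The first step is to show that the set of binomials from Lemma \ref{Grobner} is also a Gr\"obner basis of $I_G$ with respect to $<_{\lex}$. This can be checked by a Buchberger $S$-polynomial computation parallel to the one in the proof of Lemma \ref{Grobner}, or alternatively by appealing to the fact that the primitive binomials of a toric ideal constitute a universal Gr\"obner basis. Under $<_{\lex}$ the leading terms are $x_i x_{k+1+j}$ for $2 \leq i < j \leq k$ (type 1); $x_1 x_{k+2} x_{2k+4} x_{2k+5}$ (type 2); $x_i x_{k+2} x_{2k+4}$ for $2 \leq i \leq k$ (type 3a); $x_1 x_{k+1+i} x_{2k+5}$ for $2 \leq i \leq k$ (type 3b); and $x_1 x_{k+1} x_{k+1+i} x_{k+1+j} x_{2k+5}$ for $2 \leq i \leq j \leq k$ (type 4). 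Since the type (4) leading term is divisible by the type (3b) leading term $x_1 x_{k+1+i} x_{2k+5}$, the minimal generators of $J := \ini_{<_{\lex}}(I_G)$ come only from types (1), (2), (3a), and (3b), and all are squarefree.

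Next, since $J$ is squarefree, $K[\xb]/J$ is the Stanley--Reisner ring of a simplicial complex $\Delta$, and Cohen--Macaulayness of the former is equivalent to that of the latter. Observe that vertices $k+1$, $2k+2$, and $2k+3$ appear in no minimal non-face of $\Delta$, so $\Delta = \sigma \ast \Delta'$, where $\sigma$ is the $2$-simplex on those three vertices and $\Delta'$ is a complex on the remaining $2k+2$ vertices; since a join is Cohen--Macaulay iff each factor is, it suffices to show $\Delta'$ is Cohen--Macaulay. A direct enumeration shows $\Delta'$ is pure of dimension $k+1$: setting $M = \{2,\ldots,k\}$, $M' = \{k+3,\ldots,2k+1\}$, and $B_a = \{a,\ldots,k\} \cup \{k+3,\ldots,k+1+a\}$, the $4k$ facets are the four ``corner'' facets $\{u,v\} \cup B_a$ for each $a \in \{2,\ldots,k\}$ and $(u,v) \in \{1, 2k+5\} \times \{k+2, 2k+4\}$, together with the four boundary facets $\{1, k+2, 2k+5\} \cup M$, $\{1, 2k+4, 2k+5\} \cup M$, $\{1, k+2, 2k+4\} \cup M'$, and $\{k+2, 2k+4, 2k+5\} \cup M'$.

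Finally, I would exhibit a shelling of $\Delta'$ by ordering the facets first by the level $a$ (extended with $a=1$ and $a=k+1$ for the two boundary types each), and within each level by a fixed order on the corner pair $(u,v)$. The main obstacle is verifying the shelling condition: for each newly added facet $F_i$ one must check that $\langle F_1,\ldots,F_{i-1}\rangle \cap \langle F_i\rangle$ is pure of codimension one in $F_i$. This requires a careful case analysis, particularly at the transitions between the boundary levels ($a = 1$ or $k+1$) and the interior levels, where the intersection involves several previously shelled facets and the matching codimension-one subfaces of $F_i$ must be identified explicitly. Once shellability is verified, Cohen--Macaulayness of $\Delta'$, hence of $\Delta$ and $K[\xb]/\ini_{<_{\lex}}(I_G)$, follows.
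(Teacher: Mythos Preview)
Your proposal is correct and follows essentially the same strategy as the paper: compute $\ini_{<_{\lex}}(I_G)$ from the same Gr\"obner basis, observe it is squarefree, list the facets of the associated Stanley--Reisner complex, and verify shellability by an explicit ordering. The paper does not factor off the three cone vertices $k+1,\,2k+2,\,2k+3$ as you do (it works on all of $[2k+5]$ and reaches the facets via the primary decomposition of $I'$), but your enumeration of $4k$ facets matches the paper's $F_1,\dots,F_4$ and $G_{m,\ell}$ exactly after this harmless reduction.

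The one genuine difference is the shelling order: the paper shells by ``corner'' first and ``level'' second, placing all $G_{1,\ell}$, then all $G_{2,\ell}$, etc., with $F_1,\dots,F_4$ last, whereas you propose to shell by level first (placing the two $M$-boundary facets at one end and the two $M'$-boundary facets at the other). Both orders work; the case analysis is of comparable length either way, and your cone factorization makes the bookkeeping slightly lighter. One small caution: your alternative ``primitive binomials form a universal Gr\"obner basis'' is fine as a statement about the Graver basis, but you still need that the four listed walk types exhaust the primitive even closed walks of $G_{k+5}$, which is exactly what the paper established in Section~1.
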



First of all, we need to know the generators of $\ini_{<_{\lex}}(I_G)$. 
As an analogue of Lemma \ref{Grobner}, we can prove the following 
\begin{Lemma}
The set of binomials corresponding to primitive even closed walks 
(1), (2), (3) and (4) (appeared in section 1) 
is a Gr\"{o}bner basis of $I_G$ 
with respect to $<_{\lex}$. 
\end{Lemma}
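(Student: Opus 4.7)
The plan is to re-run the Buchberger case analysis of Lemma \ref{Grobner}, but with the leading monomials taken with respect to $<_{\lex}$ instead of $<_{\rev}$. My first step is to record these new leading monomials. Since $x_1$ is the largest variable under $<_{\lex}$, any generator in which $x_1$ appears has its $x_1$-containing term as leading term; this gives leading monomial $x_1x_{k+2}x_{2k+4}x_{2k+5}$ for type (2), $x_1x_{k+1+i}x_{2k+5}$ for the binomials $x_ix_{2k+2}x_{2k+3}-x_1x_{k+1+i}x_{2k+5}$ of type (3), and $x_1x_{k+1}x_{k+1+i}x_{k+1+j}x_{2k+5}$ for type (4). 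For the two types whose generators do not involve $x_1$, one checks directly that the leading monomials are $x_ix_{k+1+j}$ for type (1) (since $i<j$ forces $x_i>_{\lex}x_j$) and $x_ix_{k+2}x_{2k+4}$ for the binomials $x_ix_{k+2}x_{2k+4}-x_{k+1}x_{k+1+i}x_{2k+3}$ of type (3).

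Next, I would run through the same nine cases of pairs as in Lemma \ref{Grobner}, now organized by whether each generator carries $x_1$ in its leading monomial. A large fraction of the cross-pairs collapse immediately: a type (1) leading term $x_ix_{k+1+j}$ involves only variables with indices in $\{2,\dots,2k+1\}$ and is coprime to every $x_1$-containing leading monomial whose only shared candidate $x_{k+1+r}$ does not appear; similarly the first type (3) leading term $x_ix_{k+2}x_{2k+4}$ is coprime to all $x_1$-containing leading monomials. For the genuinely overlapping pairs the computation follows the template of Lemma \ref{Grobner}: after cancelling the common factor of the two leading monomials, the $S$-polynomial factors as the product of a monomial and a \emph{single} primitive-walk binomial, using the exchange identity $x_ix_{k+1+j}-x_jx_{k+1+i}$ of type (1) to absorb index swaps.

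The main obstacle is the enlarged number of nontrivial overlaps created by the fact that three of the five types now have $x_1$ in their leading monomial: pairs of type (4) generators, as well as type (2)/(3b), type (2)/(4), type (3b)/(3b), type (3b)/(4), and type (4)/(4), all share the factor $x_1$ (together with $x_{2k+5}$ and possibly $x_{k+1}$) in their leading monomials. These are bookkeeping-heavy but mechanical; each $S$-polynomial reduces via the same exchange identity and via the type (3) binomials, in parallel with Cases 5--9. A conceptually cleaner alternative, which avoids the enumeration entirely, is to note that the listed binomials correspond to the \emph{primitive} even closed walks and thus form the Graver basis of $I_G$; since the Graver basis of a toric ideal is a universal Gr\"obner basis, the same set is automatically a Gr\"obner basis with respect to $<_{\lex}$ (and indeed every monomial order).
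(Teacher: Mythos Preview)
Your overall plan matches the paper, which in fact gives no proof at all for this lemma beyond the remark that it is ``an analogue of Lemma~\ref{Grobner}''; the intended argument is exactly the Buchberger re-run you outline, and your identification of the $<_{\lex}$ leading monomials is correct.

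Two bookkeeping slips in your coprimeness discussion should be fixed before you carry out the cases. First, the type~(3a) leading term $x_ix_{k+2}x_{2k+4}$ is \emph{not} coprime to the type~(2) leading term $x_1x_{k+2}x_{2k+4}x_{2k+5}$: they share $x_{k+2}x_{2k+4}$, so that $S$-pair must be computed (it reduces via a type~(3b) binomial). Second, the type~(1) leading term $x_ix_{k+1+j}$ can share the factor $x_{k+1+j}$ with the leading terms of types~(3b) and~(4), so those pairs are not automatically coprime either; your parenthetical ``whose only shared candidate $x_{k+1+r}$ does not appear'' seems to acknowledge this, but the surrounding sentence reads as if these cases vanish. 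None of this breaks the approach---each of these $S$-polynomials still reduces to zero by the same exchange identities you describe---but the list of nontrivial overlaps is longer than you suggest.

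Your closing alternative is correct and is a genuinely different (and much shorter) proof than what the paper gestures at. Since the paper asserts that (1)--(4) exhaust the primitive even closed walks of $G$, the corresponding binomials form the Graver basis of $I_G$; and for any toric ideal the Graver basis contains every reduced Gr\"obner basis (Sturmfels), hence is itself a Gr\"obner basis for every monomial order, including $<_{\lex}$. This bypasses the entire $S$-pair enumeration, at the cost of invoking the Graver/universal-Gr\"obner theorem; the paper's implied approach is more elementary but far more laborious.
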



\begin{Corollary}\label{initial_lex}
The initial ideal of $I_G$ with respect to $<_{\lex}$ 
is generated by the following monomials: 
\begin{displaymath}
  (\flat) \qquad
  \begin{aligned}
    &x_ix_{k+1+j}, \quad  2 \leq i < j \leq k, \\
    &x_1x_{k+2}x_{2k+4}x_{2k+5}, \\
    &x_rx_{k+2}x_{2k+4}, \  x_1x_{k+1+r}x_{2k+5}, \quad  2 \leq r \leq k. 
  \end{aligned}
\end{displaymath}

\par
In particular, $\ini_{<_{\lex}} I_G$ is a squarefree monomial ideal. 
\end{Corollary}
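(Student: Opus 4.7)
The plan is to invoke the preceding lemma, which asserts that the binomials attached to the primitive even closed walks of types (1)--(4) form a Gr\"obner basis of $I_G$ with respect to $<_{\lex}$. Once this is granted, $\ini_{<_{\lex}}(I_G)$ is generated by the leading monomials of these binomials, so the proof reduces to the bookkeeping exercise of identifying those leading monomials under the ordering $x_1 > x_2 > \cdots > x_{2k+5}$, and then discarding any that are redundant.

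I will walk through the four types and read off the leading monomial in each case. For type (1), the binomial $x_ix_{k+1+j}-x_jx_{k+1+i}$ with $2 \leq i < j \leq k$ has leading term $x_ix_{k+1+j}$, because among the smaller-index variables $x_i$ beats $x_j$. For type (2), the monomial $x_1x_{k+2}x_{2k+4}x_{2k+5}$ carries the variable $x_1$, which is the largest overall, so it is the leading term against $x_{k+1}x_{2k+2}x_{2k+3}^2$. For type (3), the 6-cycle $(e_i,e_{k+1+i},e_{k+2},e_{2k+3},e_{2k+4},e_{k+1})$ gives $x_ix_{k+2}x_{2k+4}-x_{k+1}x_{k+1+i}x_{2k+3}$ with leading term $x_ix_{k+2}x_{2k+4}$ (since $i\le k<k+1$), and the 6-cycle $(e_i,e_{k+1+i},e_{2k+2},e_{2k+5},e_{2k+3},e_1)$ gives $x_ix_{2k+2}x_{2k+3}-x_1x_{k+1+i}x_{2k+5}$ with leading term $x_1x_{k+1+i}x_{2k+5}$ (because of the presence of $x_1$). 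These leading monomials are precisely the list $(\flat)$.

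It remains to handle type (4). For the binomial associated to $(e_{k+2},e_{2k+5},e_{2k+2},e_{k+1+i},e_i,e_1,e_{2k+4},e_{k+1},e_j,e_{k+1+j})$ with $2 \leq i \leq j \leq k$, the two monomials are $x_ix_jx_{k+2}x_{2k+2}x_{2k+4}$ and $x_1x_{k+1}x_{k+1+i}x_{k+1+j}x_{2k+5}$. Since the second contains $x_1$, it is the leading monomial. However, the leading monomial $x_1x_{k+1+i}x_{2k+5}$ of the associated type (3) binomial (with $r=i$) already divides $x_1x_{k+1}x_{k+1+i}x_{k+1+j}x_{2k+5}$, so each type (4) leading term is redundant in the list of generators of $\ini_{<_{\lex}}(I_G)$. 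This is the one step of the argument that requires a moment's thought rather than a direct read-off; everything else is purely mechanical.

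Finally, the squarefreeness claim follows by inspection of $(\flat)$: in each of the three families of listed monomials, the exponents on all variables are equal to one. Since the indices $i,j,r$ range over values in $\{2,\dots,k\}$ while the remaining factors involve the distinct variables $x_1,x_{k+2},x_{2k+4},x_{2k+5}$ (and $x_{k+1+r}$ with $r\ge 2$), no variable appears twice in any generator. The main (and only nontrivial) obstacle in the plan is the redundancy step for type (4); the rest is a careful comparison of variable subscripts under the fixed lexicographic order.
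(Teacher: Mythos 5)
Your proposal is correct and follows exactly the route the paper intends: invoke the lex Gr\"obner basis lemma, read off the leading terms of types (1)--(3), and observe that the type (4) leading term $x_1x_{k+1}x_{k+1+i}x_{k+1+j}x_{2k+5}$ is divisible by the type (3) generator $x_1x_{k+1+i}x_{2k+5}$ and hence redundant --- which is precisely the paper's remark that the type (4) initial term can be excluded. No gaps.
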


\par
Note that we can exclude the initial term of the binomial corresponding to 
the even closed walk of type (4). 

\bigskip

Let $I'$ be the initial ideal of $I_G$ with respect to $<_{\lex}$. 
Since $I'$ is squarefree, we can define a simplicial complex ${\Delta}'$ 
on $[2k+5]$ whose Stanley--Reisner ideal coincides with $I'$. 
In order to prove that $K[\xb]/I'$ is Cohen--Macaulay, 
we will show that $\Delta'$ is shellable. 

\par
We recall the definition of the shellable simplicial complex. 
Let $\Delta$ be a simplicial complex. 
We call $\Delta$ is \textit{pure} if every facets (maximal faces) 
of $\Delta$ have the same dimension. 
A pure simplicial complex $\Delta$ of dimension $d-1$ 
is called {\em shellable} if all its facets 
(those are all $(d-1)$-faces of $\Delta$) can be listed 
$$F_1,F_2,\ldots,F_s$$
in such a way that 
$$(\bigcup_{j=1}^{i-1} \langle F_j \rangle) \cap \langle F_i \rangle 
  \quad 
  \big( = \bigcup_{j=1}^{i-1} \langle F_j \cap F_i \rangle \big)$$
is pure of dimension $d-2$ for every $1 < i \leq s$. 
Here $\langle F_i \rangle:=\{\sigma \in \Delta : \sigma \subset F_i\}.$ 
It is known that if $\Delta$ is shellable, 
then $K[{\Delta}]$ is Cohen--Macaulay for any field $K$.

\par
To show that $\Delta'$ is shellable, we investigate the facets of $\Delta'$. 
Let $F({\Delta}')$ be the set of facets of $\Delta'$. 
Then the standard primary decomposition of $I' = I_{{\Delta}'}$ is 
$$
I_{{\Delta}'} = \bigcap_{F \in F(\Delta')} P_{\bar{F}}, 
$$
where $\bar{F}$ is the complement of $F$ in $[2k+5]$ and 
$P_{\bar{F}} = ( x_i : i \in \bar{F})$; see \cite[Lemma 1.5.4]{HerzogHibi}. 
Hence we can obtain $F({\Delta}')$ 
from the standard primary decomposition of $I'$. 

\begin{Lemma}\label{minimalprimes}
  The standard primary decomposition of $I'$ is the intersection of the 
  following prime ideals: 
  \begin{displaymath}
    (\sharp) \qquad 
    \begin{aligned}
      &(x_1)+(x_2,x_3,\ldots,x_k), \; 
       (x_{2k+5})+(x_2,x_3,\ldots,x_k), \\
      &(x_{k+2})+(x_{k+3},x_{k+4},\ldots,x_{2k+1}), \; 
       (x_{2k+4})+(x_{k+3},x_{k+4},\ldots,x_{2k+1}), \\
      &(x_1,x_{k+2})+I_{\ell}', \;\; 2 \leq \ell \leq k, \\
      &(x_1,x_{2k+4})+I_{\ell}', \;\; 2 \leq \ell \leq k, \\
      &(x_{k+2},x_{2k+5})+I_{\ell}', \;\; 2 \leq \ell \leq k, \\
      &(x_{2k+4},x_{2k+5})+I_{\ell}', \;\; 2 \leq \ell \leq k, 
    \end{aligned}
  \end{displaymath}
  where $I_{\ell}'=(x_2,\ldots,x_{\ell-1},x_{k+2+\ell},\ldots,x_{2k+1})$ 
  for $\ell=2,\ldots,k$. 
%
\end{Lemma}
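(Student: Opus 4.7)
The plan is to use that $I'$ is a squarefree monomial ideal, so its standard primary decomposition equals the intersection of its minimal primes; these in turn are in bijection with the minimal vertex covers of the hypergraph $\mathcal{H}$ whose edges are the supports of the generators listed in $(\flat)$. It therefore suffices to enumerate all minimal vertex covers of $\mathcal{H}$.

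First I would single out the four \emph{pivotal} variables $x_1, x_{2k+5}, x_{k+2}, x_{2k+4}$, observing that every generator of $(\flat)$ other than the quadrics of type (1) involves at least one of them. I would then split into cases according to which of the pairs $\{x_1, x_{2k+5}\}$ and $\{x_{k+2}, x_{2k+4}\}$ a minimal vertex cover $V$ meets. If $V$ meets neither, the generator $x_1 x_{k+2} x_{2k+4} x_{2k+5}$ cannot be covered. If $V$ meets only one of the two pairs, then the generators $x_r x_{k+2} x_{2k+4}$ (respectively $x_1 x_{k+1+r} x_{2k+5}$) for $r = 2, \ldots, k$ force $V$ to contain all of $\{x_2, \ldots, x_k\}$ (resp.\ $\{x_{k+3}, \ldots, x_{2k+1}\}$); minimality then yields precisely the first four primes of $(\sharp)$.

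The remaining case, and the technical heart of the argument, is when $V$ meets both pivotal pairs. Then the generators of types (2)--(4) are automatically covered, so the remaining constraints come only from the quadrics $x_i x_{k+1+j}$ with $2 \leq i < j \leq k$. These form a ``staircase'' bipartite graph on $\{x_2, \ldots, x_k\}$ versus $\{x_{k+3}, \ldots, x_{2k+1}\}$. I expect the crux to be showing that its minimal vertex covers are exactly the sets $V_\ell = \{x_2, \ldots, x_{\ell-1}\} \cup \{x_{k+2+\ell}, \ldots, x_{2k+1}\}$ for $\ell = 2, \ldots, k$, which are precisely the $I_\ell'$ in the statement. This I would argue by taking a minimal cover $V$ and letting $\ell$ be the smallest index with $x_\ell \notin V$: the uncovered edges $\{x_\ell, x_{k+1+j}\}$ for $j > \ell$ then force $\{x_{k+2+\ell}, \ldots, x_{2k+1}\} \subseteq V$, after which minimality pins $V$ down to exactly $V_\ell$.

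Finally I would combine these bipartite covers with the two binary choices from $\{x_1, x_{2k+5}\}$ and $\{x_{k+2}, x_{2k+4}\}$---minimality forces exactly one element from each pair, since, for instance, once $x_1 \in V$ the variable $x_{2k+5}$ is no longer needed for any generator---to recover the four remaining families in $(\sharp)$. A routine verification that no variable in any listed prime can be removed without leaving some generator uncovered closes the argument. The main obstacle throughout is the staircase bipartite enumeration; everything else is bookkeeping.
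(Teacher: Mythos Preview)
Your proposal is correct and takes a genuinely different route from the paper. The paper verifies the decomposition computationally: it observes that the listed primes are pairwise incomparable, then shows directly that their intersection equals $I'$ by grouping the primes, computing partial intersections, and finally proving $\bigcap_{\ell=2}^{k} I_\ell' = (x_i x_{k+1+j} : 2 \le i < j \le k)$ by induction on an upper truncation parameter $k'$. You instead use the Stanley--Reisner correspondence between minimal primes and minimal vertex covers of the hypergraph on the supports of $(\flat)$, and enumerate those covers by a case split on the four pivotal variables $x_1, x_{2k+5}, x_{k+2}, x_{2k+4}$, reducing the core of the problem to classifying minimal vertex covers of the staircase bipartite graph $\{x_i,x_{k+1+j}\}_{i<j}$.

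The two arguments are essentially Alexander-dual to one another: your staircase classification $V_\ell = \{x_2,\dots,x_{\ell-1}\}\cup\{x_{k+2+\ell},\dots,x_{2k+1}\}$ is precisely the cover-side statement of the paper's intersection identity for the $I_\ell'$. Your approach has the advantage that minimality and completeness of the list come out of the same case analysis, so no separate non-inclusion check is needed, and the staircase step is a clean one-shot argument rather than an induction. The paper's approach, on the other hand, avoids invoking the vertex-cover dictionary and is entirely self-contained as ideal arithmetic. One small point worth making explicit in your write-up: in the ``both pivotal pairs'' case, you should note that once a pivotal element from each pair is present, the staircase variables in $V$ serve \emph{only} to cover type~(1) quadrics (since every other generator is already hit), so the restriction of $V$ to the staircase vertices is itself a minimal cover there; this is implicit in your sketch but is the step that justifies concluding $V = \{\text{pivotal pair}\}\cup V_\ell$ exactly.
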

\begin{proof}
  Since there is no relation of inclusion among the prime ideals 
  on $(\sharp)$, it is enough to prove that the intersection of 
  these prime ideals coincides with $I'$. 

  \par
  First, we consider the case where $k=1$. 
  Then $G(I') = \{ x_1 x_3 x_6 x_7 \}$ and $(\sharp)$ consist 
  of only the first $2$ rows: 
  $(x_1)$, $(x_{7})$, $(x_{3})$, and $(x_{6})$. 
  Thus the assertion trivially holds. 

  \par
  Next, we consider the case where $k=2$. Note that $I_{2}' = 0$. 
  Then the ideal $I'$ is 
  \begin{displaymath}
    \begin{aligned}
    I' &= (x_1 x_4 x_8 x_9, x_1 x_5 x_9, x_2 x_4 x_8) \\
       &= (x_1, x_2) \cap (x_1, x_4) \cap (x_1, x_8) 
         \cap (x_4, x_5) \cap (x_4, x_9) \cap (x_8, x_5) \cap (x_8, x_9) 
         \cap (x_9, x_2) \\
       &= (x_1, x_2) \cap (x_9, x_2) \cap (x_4, x_5) \cap (x_8, x_5) 
          \cap (x_1, x_4) \cap (x_1, x_8) \cap (x_4, x_9) \cap (x_8, x_9), 
    \end{aligned}
  \end{displaymath}
  as desired. 

  \par
  Hence we may assume that $k \geq 3$. 
  Then the intersection of the prime ideals on the first row of $(\sharp)$ is 
  \begin{displaymath}
    (x_1 x_{2k+5}, x_2, x_3, \ldots, x_k)
  \end{displaymath}
  and that on the second row of $(\sharp)$ is
  \begin{displaymath}
    (x_{k+2} x_{2k+4}, x_{k+3}, x_{k+4}, \ldots, x_{2k+1}). 
  \end{displaymath}
  For $\ell = 2, \ldots, k$, 
  the intersection of the prime ideals on the last $4$ rows of $(\sharp)$ is 
  \begin{displaymath}
    \begin{aligned}
      &((x_1, x_{k+2}) + I_{\ell}') \cap ((x_1, x_{2k+4}) + I_{\ell}') 
      \cap ((x_{k+2}, x_{2k+5}) + I_{\ell}') 
      \cap ((x_{2k+4}, x_{2k+5}) + I_{\ell}') \\
      = &((x_1, x_{k+2} x_{2k+4}) + I_{\ell}') 
         \cap ((x_{k+2} x_{2k+4}, x_{2k+5}) + I_{\ell}') \\
      = &(x_1 x_{2k+5}, x_{k+2} x_{2k+4}) + I_{\ell}'. 
    \end{aligned}
  \end{displaymath}
  Hence, the intersection of the prime ideals 
  on the last $4$ rows of $(\sharp)$ for all $\ell$ is 
  \begin{displaymath}
    (x_1 x_{2k+5}, x_{k+2} x_{2k+4}) + \bigcap_{\ell = 2}^k I_{\ell}'. 
  \end{displaymath}
  Therefore the intersection of all prime ideals of $(\sharp)$ is 
  \begin{equation}
    \label{eq:PrimeDecomp}
    \begin{aligned}
      &x_1 x_{2k+5} (x_{k+2} x_{2k+4}, x_{k+3}, x_{k+4}, \ldots, x_{2k+1}) 
        + x_{k+2} x_{2k+4} (x_{1} x_{2k+5}, x_2, x_3, \ldots, x_k) \\
      &+ (\bigcap_{\ell=2}^k I_{\ell}') \cap 
      (x_1 x_{2k+5}, x_2, x_3, \ldots, x_k)
      \cap (x_{k+2} x_{2k+4}, x_{k+3}, x_{k+4}, \ldots, x_{2k+1}). 
    \end{aligned}
  \end{equation}
  The ideal on the first row of (\ref{eq:PrimeDecomp}) coincides with 
  the one generated by monomials on the last $2$ rows 
  of $(\flat)$. 
  Since $I_2' = (x_{k+4}, x_{k+5}, \ldots, x_{2k+1})$ 
  and $I_k' = (x_2, x_3, \ldots, x_{k-1})$, 
  the ideal on the second row of (\ref{eq:PrimeDecomp}) coincides with 
  $\bigcap_{\ell = 2}^ {k} I_{\ell}'$. 
  Hence, we may prove that 
  \begin{displaymath}
    \bigcap_{\ell = 2}^k I_{\ell}' 
    = ( x_i x_{k+1+j} \; : \;  2 \leq i< j \leq k). 
  \end{displaymath}
  To show this equality, we prove 
  \begin{equation}
    \label{eq:intersectionI'}
    \bigcap_{\ell = 2}^{k'} I_{\ell}' 
    = (x_i x_{k+1+j} \; : \; 2 \leq i < j \leq k')
    + (x_{k+2+k'}, \ldots, x_{2k+1}) 
  \end{equation}
  for $k' = 2, \ldots, k$. When $k'=k$, we obtain the desired equality. 
  We use induction on $k' \geq 2$. 
  The case of $k'=2$ is trivial. 
  When (\ref{eq:intersectionI'}) holds for $k'$, we have 
  \begin{displaymath}
    \begin{aligned}
      \bigcap_{\ell = 2}^{k' + 1} I_{\ell}' 
      &= (\bigcap_{\ell = 2}^{k'} I_{\ell}') \cap I_{k' + 1}' \\
      &= ((x_i x_{k+1+j} \; : \; 2 \leq i < j \leq k') 
           + (x_{k+2+k'}, \ldots, x_{2k+1})) 
        \cap (x_2, \ldots, x_{k'}, x_{k+3+k'}, \ldots, x_{2k+1}) \\
      &= (x_i x_{k+1+j} \; : \; 2 \leq i < j \leq k')
          + x_{k+2+k'} (x_{2}, \ldots, x_{k'}) 
          + (x_{k+3+k'}, \ldots, x_{2k+1}) \\
      &= (x_i x_{k+1+j} \; : \; 2 \leq i < j \leq k'+1)
          + (x_{k+3+k'}, \ldots, x_{2k+1}), 
    \end{aligned}
  \end{displaymath}
  as desired. 
\end{proof}

\par
Now we are in the position to prove Lemma \ref{main3}. 
\begin{proof}[Proof of Lemma \ref{main3}]
By Lemma \ref{minimalprimes}, 
$F({\Delta}')$ consists of the following subsets of $[2k+5]$: 
\begin{eqnarray*}
&&F_1=\overline{\{1\} \union \{2,3,\ldots,k\}}, \; 
F_2=\overline{\{2k+5\} \union \{2,3,\ldots,k\}}, \\
&&F_3=\overline{\{k+2\} \union \{k+3,k+4,\ldots,2k+1\}}, \\ 
&&F_4=\overline{\{2k+4\} \union \{k+3,k+4,\ldots,2k+1\}}, \\ 
&&G_{1,\ell}=\overline{A_1 \union G_{\ell}'}, \;\; 2 \leq \ell \leq k, \\
&&G_{2,\ell}=\overline{A_2 \union G_{\ell}'}, \;\; 2 \leq \ell \leq k, \\
&&G_{3,\ell}=\overline{A_3 \union G_{\ell}'}, \;\; 2 \leq \ell \leq k, \\
&&G_{4,\ell}=\overline{A_4 \union G_{\ell}'}, \;\; 2 \leq \ell \leq k, 
\end{eqnarray*}
where $G_{\ell}'=\{2,\ldots,\ell-1,k+2+\ell,\ldots,2k+1\}$ for $2 \leq \ell \leq k$, 
$A_1=\{1,k+2\}$, $A_2=\{1,2k+4\}$, $A_3=\{k+2,2k+5\}$, $A_4=\{2k+4,2k+5\}$ 
and $\overline{F}=[2k+5] \setminus F$. 
Note that $G_{m, \ell} \cap A_j = \emptyset$ and $\#(G_{m,\ell}) = k-2$.  
In particular, $\Delta'$ is pure of dimension $k+4$. 

\par
Now we define the ordering on $F(\Delta')$ as follows:
\begin{eqnarray}\label{sequence}
G_{1,2},\ldots,G_{1,k},G_{2,2},\ldots,G_{2,k}, 
G_{3,2},\ldots,G_{3,k},G_{4,2},\ldots,G_{4,k}, 
F_1,F_2,F_3,F_4. 
\end{eqnarray}
We will prove $\Delta'$ satisfies the condition of shellability 
with this ordering. 
For $F, G \in F({\Delta})$, we write $G \prec F$ if $G$ lies in previous 
to $F$ on (\ref{sequence}). 

\par
First, we investigate 
$\Delta_{m, \ell} := (\bigcup_{G' \prec G_{m, \ell}} \langle G' \rangle) 
  \cap \langle G_{m,\ell} \rangle 
  = \bigcup_{G' \prec G_{m, \ell}} 
    \langle G' \cap G_{m,\ell} \rangle$ 
for $m = 1, 2, 3, 4$. For ${\ell}' < \ell$, one has 
\begin{eqnarray*}
G_{m,\ell'} \cap G_{m,\ell} 
&=& \overline{A_m \union G_{\ell'}'} \cap \overline{A_m \union G_{\ell}'} \\
&=& \overline{(A_m \union G_{\ell'}') \union (A_m \union G_{\ell}')} \\
&=& \overline{A_m \union \{2,\ldots,\ell-2,\ell-1,k+2+\ell',k+3+\ell',\ldots,2k+1\}} \\
&\subset& \overline{A_m \union \{2,\ldots,\ell-2,\ell-1,k+1+\ell,k+2+\ell,\ldots,2k+1\}} \\
&=& G_{m,\ell-1} \cap G_{m,\ell} 
\end{eqnarray*}
and $G_{m, \ell -1} \cap G_{m, \ell}$ 
is a $(k+3)$-dimensional face. 
Then we can conclude that $\Delta_{1, \ell}$ is pure of dimension $k+3$. 
Assume that $m = 2, 3, 4$. For $m' < m$, one has
\begin{eqnarray*}
G_{m',\ell'} \cap G_{m,\ell} 
&=& \overline{A_{m'} \union G_{\ell'}'} \cap \overline{A_{m} \union G_{\ell}'} \\
&=& \overline{(A_{m'} \union G_{\ell'}') \union (A_{m} \union G_{\ell}')} \\
&\subset& \overline{(A_{m'} \union A_m) \union G_{\ell}'}. 
\end{eqnarray*}
When $m=2$, then $m'=1$ and 
\begin{displaymath}
  \overline{(A_{1} \union A_2) \union G_{\ell}'}
  = \overline{\{ 1, k+2, 2k+4 \} \union G_{\ell}'} 
  = G_{1, \ell} \cap G_{2, \ell},
\end{displaymath}
which is $(k+3)$-dimensional. 
Therefore we can conclude that $\Delta_{2, \ell}$ is a pure simplicial complex 
of dimension $k+3$. 
Similarly, we can see that $\Delta_{m, \ell}$ is pure of dimension $k+3$ 
for $m=3, 4$ since e.g., 
$A_{2} \cup A_{3} \supset A_{1} \cup A_{3} = \{ 1, k+2, 2k+5 \}$. 

\par
Next, we investigate 
$\Delta_{s} := \bigcup_{G \prec F_s} \langle G \cap F_s \rangle$ 
for $s=1, 2, 3, 4$. 
It is easy to see that 
$G_{1,k} \cap F_1$ (resp.\  $G_{2,k} \cap F_1$) 
contains
$G_{1,\ell} \cap F_1$ and $G_{3,\ell} \cap F_1$ 
(resp.\  $G_{2,\ell} \cap F_1$ and $G_{4,\ell} \cap F_1$). 
Thus facets of $\Delta_1$ are $G_{1,k} \cap F_1$ and 
$G_{2,k} \cap F_1$, those are $(k+3)$-dimensional. 

\par
Similarly, we can see that the facets of $\Delta_2$ are
$G_{3,k} \cap F_1$, $G_{4,k} \cap F_1$,
and $F_1 \cap F_2$, those are also $(k+3)$-dimensional. 

\par
For $\Delta_3$, 
we can verify that 
$G_{1,2} \cap F_3$ (resp.\  $G_{3,2} \cap F_3$) 
is a $(k+3)$-dimensional face containing 
$G_{1,\ell} \cap F_3$, $G_{2,\ell} \cap F_3$ and $F_1 \cap F_3$ 
(resp.\  $G_{3,\ell} \cap F_3$, $G_{4,\ell} \cap F_3$ and $F_2 \cap F_3$). 
Therefore $\Delta_3$ is pure of dimension $k+3$. 

\par
Similarly, we can see that $\Delta_4$ is also 
a pure simplicial complex of dimension $k+3$ whose facets are  
$G_{2,2} \cap F_4$, $G_{4,2} \cap F_4$, 
and $F_3 \cap F_4$. 
\end{proof}

\end{document}